\numberwithin{equation}{section}
\def\re{\operatorname{Re}}
\def\im{\operatorname{Im}}
\def\dim{\operatorname{dim}}
\def\diam{\operatorname{diam}}
\def\sing{\operatorname{sing}}
\def\res{\operatorname{res}}
\def\N{\mathbb{N}}
\def\C{\mathbb{C}}
\def\Z{\mathbb{Z}}
\def\CC{\widehat{\mathbb C}}
\def\R{\mathbb{R}}
\def\Speiser{\mathcal{S}}
\def\B{\mathcal{B}}
\def\O{\mathcal{O}}
\newtheorem{theorem}{Theorem}[section]
\newtheorem{theorema}{Theorem}
\newtheorem{lemma}[theorem]{Lemma}
\newtheorem{corollary}[theorem]{Corollary}
\theoremstyle{remark}
\theoremstyle{definition}
\newtheorem*{ack}{Acknowledgment}
\begin{document}
\title{The Hausdorff dimension of escaping sets of meromorphic functions in the Speiser class}
\author{Walter Bergweiler and Weiwei Cui}
\date{}
\maketitle
\begin{abstract}
Bergweiler and Kotus gave sharp 
upper bounds for the Hausdorff dimension of the escaping set of a
meromorphic function in the Eremenko-Lyubich class, in terms of   the order of the 
function and the maximal multiplicity of the poles.
We show that these bounds are also sharp in the Speiser class. 
We apply this method also to construct meromorphic functions in the Speiser class with
preassigned dimensions of the Julia set and the escaping set.
\end{abstract}

\section{Introduction and main results} \label{intro}

Let $f\colon \C\to\CC:=\C\cup\{\infty\}$ be a transcendental meromorphic function.
The Fatou set $F(f)$ of $f$ is the set of all points $z\in\C$ which have a
neighborhood where the iterates $f^n$ of $f$ are defined and form a normal family.
The complement $J(f)=\C\setminus F(f)$ is called the Julia set.
These sets play a fundamental role in complex dynamics; see~\cite{Bergweiler1993} for
an introduction to the dynamics of transcendental meromorphic functions.
Besides $F(f)$ and $J(f)$, a key role in transcendental dynamics is played by the
escaping set $I(f)$ of~$f$. It consists of the points $z\in\C$ for which the iterates
$f^{n}(z)$ tend to $\infty$ as $n\to\infty$. It is known that $I(f)\neq\emptyset$ and
$J(f)=\partial I(f)$; see~\cite{Eremenko1989,Dominguez1998}.

Let $\sing(f^{-1})$ be the set of singularities of the inverse function of $f$;
that is, the set of critical and asymptotic values of~$f$.
The Eremenko-Lyubich class $\B$ consists of the transcendental meromorphic
functions $f$ for which the set $\sing(f^{-1})\setminus\{\infty\}$  is bounded.
The subclass of $\B$ for which $\sing(f^{-1})$ is finite is called the Speiser
class and denoted by~$\Speiser$.
We write $f\in \Speiser_q$ if the cardinality of $\sing(f^{-1})$ is equal to~$q$.
The dynamics of functions both in the Eremenko-Lyubich and Speiser class has
received much attention in recent years.
We refer to~\cite{Sixsmith2018} for an introduction to and 
a survey of recent results in these classes.

We recall the notion of order of growth. Let $T(r,f)$ be the Nevanlinna 
characteristic of a meromorphic function $f$;
see~\cite{Goldberg2008} for notations and results of Nevanlinna theory.
 The order $\rho(f)$ of $f$ is defined by
\begin{equation} 
\rho(f)=\limsup_{r\to\infty}\frac{\log T(r,f)}{\log r}.
\end{equation}
If $f$ is an entire function, one can replace $T(r,f)$ by $\log M(r,f)$ with $M(r,f)=\max_{|z|=r}|f(z)|$.

Many authors have studied the Hausdorff dimension of Julia sets and escaping sets.
We refer to~\cite{Kotus2008} and~\cite{Stallard2008} for surveys.
Let $\dim A$ denote the Hausdorff dimension of a set $A\subset\C$.
It was proved independently by
Bara\'nski~\cite{Baranski2008} and Schubert~\cite{Schubert2007}
that if $f\in\B$ is entire and of finite order, then $\dim I(f)=\dim J(f)=2$.
More generally, this result holds for meromorphic functions in $\B$ for which 
$\infty$ is an asymptotic value; see~\cite[Theorem~1.2]{Bergweiler2009}.

The situation is different for meromorphic functions for which $\infty$ is not an
asymptotic value, as shown by the following result~\cite[Theorem 1.1]{Bergweiler2012}. 

\begin{theorema}\label{thma}
Let $f\in \B$ be a transcendental meromorphic function of finite order~$\rho$.
Suppose that $\infty$ is not an asymptotic value of $f$ and that
there exists $M\in\N$ such that all but finitely many poles of $f$ have multiplicity 
at most~$M$. Then
\begin{equation} \label{1b}
\dim I(f)\leq \frac{2M\rho}{2+M\rho}.
\end{equation} 
\end{theorema}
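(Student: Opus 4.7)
My plan is to prove~\eqref{1b} by adapting to the meromorphic setting the standard Eremenko--Lyubich recipe for bounding the Hausdorff dimension of escaping sets. Since $\sing(f^{-1})\setminus\{\infty\}$ is bounded, fix $R_0$ with $\sing(f^{-1})\setminus\{\infty\}\subset D(0,R_0)$ and take $R>R_0$. Because $\infty$ is not an asymptotic value, every connected component of $f^{-1}(\{|w|>R\})$ is a bounded Jordan domain (a ``tract'') containing exactly one pole~$p$, and $f$ restricts to a proper holomorphic map of degree equal to the multiplicity of~$p$ from this tract onto $\{|w|>R\}\cup\{\infty\}$. From the Laurent expansion at~$p$, each tract has diameter $O(R^{-1/m_p})=O(R^{-1/M})$ for all but finitely many $p$, with constants depending on the residue data at~$p$.

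\textbf{Key estimates.} The bulk of the work is to control the $n$-fold pullbacks. Every point $z\in I(f)$ eventually acquires an itinerary $(W_n)_{n\ge 1}$ of tracts with $f^n(z)\in W_n$, and the preimage of $W_n$ under the corresponding univalent branches of $f^{-k}$ (for $k<n$) is a simply connected set whose diameter can be estimated by iterated Koebe distortion, since these branches avoid $\sing(f^{-1})$ and each is conformally a map from a disk (the exterior of $\{|w|\le R\}$ in $\widehat{\C}$) to a tract. The number of tracts hitting $\{|z|\le r\}$ is the Nevanlinna counting function $n(r,f)$, which by the order hypothesis satisfies $n(r,f)=O(r^{\rho+\varepsilon})$ for any $\varepsilon>0$ via the first main theorem. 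Combining the tract-size estimate $R^{-1/M}$ with the tract-count estimate $r^{\rho+\varepsilon}$, one sums the $\alpha$-diameters over all level-$n$ itineraries and shows this total tends to zero as $n\to\infty$ provided $\alpha>\alpha_0:=2M\rho/(2+M\rho)$; this yields $\mathcal{H}^\alpha(I(f))=0$ and hence~\eqref{1b}. The critical exponent $\alpha_0$ is precisely the value at which the two competing scalings (the per-pullback size contraction driven by the multiplicity cap $M$, and the growth in the number of tracts driven by the order~$\rho$) equilibrate against the ambient area exponent~$2$.

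\textbf{Main obstacle.} The principal difficulty is the iteration. Single Koebe estimates are routine, but composing them across $n$ pullbacks is delicate because $|f'|$ blows up near every pole and the tracts are spread throughout~$\C$ rather than being concentrated in a strip, as happens for entire functions in $\B$. The most efficient approach is to express the diameter of the $n$-th pullback directly in terms of $|f^n|$ at a base point, via the local scaling at the poles on the itinerary, rather than as a product of individual derivative factors, and then to split the pole sum by dyadic annuli so as to use $n(r,f)=O(r^{\rho+\varepsilon})$ uniformly across scales. A further technical issue is controlling the constants appearing in the Laurent expansions $f(z)\sim a_{-m_p}(z-p)^{-m_p}$ uniformly in~$p$; the boundedness of $\sing(f^{-1})$ together with a Nevanlinna-type estimate on the residues of a meromorphic function of finite order provides the needed uniformity. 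Once these ingredients are in place, the sums assemble into a convergent series for every $\alpha>\alpha_0$, which gives~\eqref{1b}.
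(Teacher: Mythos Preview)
The paper does not prove Theorem~A; it quotes it from \cite[Theorem~1.1]{Bergweiler2012}. However, enough of the Bergweiler--Kotus argument is reproduced here (Lemma~2.1 and the covering estimate~\eqref{4u} in Section~5) to compare with your outline. Your plan---tracts around poles, iterated pullbacks with Koebe control, dyadic annuli, and Nevanlinna counting---is exactly the architecture of the original proof, and your identification of $\alpha_0=2M\rho/(2+M\rho)$ as the threshold exponent is correct.

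There is one point where your sketch is imprecise and could lead you astray. You write that you need to control the Laurent coefficients $b_j$ (where $f(z)\sim (b_j/(z-a_j))^{m_j}$) ``uniformly in~$p$'' via ``a Nevanlinna-type estimate on the residues.'' No uniform bound on $|b_j|$ exists in general, and none is needed. What the proof actually uses is the \emph{aggregate} area bound
\[
\sum_{|a_j|\le r} |b_j|^2 \;=\; O(r^2),
\]
which holds because the tracts are disjoint disks of radius $\asymp |b_j|R^{-1/m_j}$ inside $\{|z|\le 2r\}$ (this is where $f\in\B$ enters, not Nevanlinna theory). One then combines this with $n(r,f)=O(r^{\rho+\varepsilon})$ via H\"older's inequality exactly as in the proof of Lemma~2.3 in the paper: splitting $\sum_j(|b_j|/|a_j|^{1+1/M})^t$ over dyadic shells $P(l)$ and writing
\[
S_l \;\le\; \Bigl(\sum_{j\in P(l)}|b_j|^2/|a_j|^2\Bigr)^{t/2}\Bigl(\sum_{j\in P(l)}|a_j|^{-t/(M(1-t/2))}\Bigr)^{1-t/2}.
\]
The first factor is $O(1)$ by the area bound; the second is summable in $l$ precisely when $t>2M\rho/(2+M\rho)$. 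This is the equilibration you describe, but the mechanism is H\"older against an $\ell^2$ area constraint, not a pointwise residue bound. Once you replace ``uniformity'' by this $\ell^2$ estimate, your outline matches the Bergweiler--Kotus proof essentially step for step.
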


We have strict inequality in~\eqref{1b} 
for functions of the form $R(e^z)$ \cite{Kotus2018} and
for Nevanlinna functions~\cite{Cui2019}. 
In general, however, \eqref{1b} is best possible.
More precisely, given $\rho\in[0,\infty)$ and $M\in\N$ there exists a transcendental 
meromorphic function $f\in\B$ satisfying the conditions stated in Theorem~\ref{thma}
such that we have equality in \eqref{1b}; see~\cite[Theorem 1.2]{Bergweiler2012}.

Our main result shows that~\eqref{1b} is also best possible in the Speiser class $\Speiser$ and even in $\Speiser_3$.

\begin{theorem}\label{theorem1}
Let $M\in\N$ and $\rho\in [0,\infty)$. 
Then there exists a function $f\in \Speiser_3$ of order $\rho$
such that all poles of $f$ have multiplicity $M$ and
\begin{equation} \label{1c}
\dim I(f)= \frac{2M\rho}{2+M\rho}.
\end{equation} 
\end{theorem}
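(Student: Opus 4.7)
The upper bound $\dim I(f)\le 2M\rho/(2+M\rho)$ is immediate from Theorem~\ref{thma} once a function with the stated features is exhibited, so the substance of the theorem lies in constructing $f\in\Speiser_3$ of order $\rho$ with all poles of multiplicity exactly~$M$, and then establishing the matching lower bound $\dim I(f)\ge 2M\rho/(2+M\rho)$.

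My plan for the construction is to realise $f$ as a composition $f=R\circ g$, where $R$ is a rational map whose poles are all of multiplicity $M$ (for instance $R(w)=c/w^M$) and $g$ is an entire function of order $\rho$ with only two finite singular values $a_1,a_2$, both different from~$0$. Then the poles of $f$ are exactly the zeros of~$g$, of multiplicity~$M$ provided those zeros are simple, and $\sing(f^{-1})\subseteq R(\sing(g^{-1}))\cup\{\text{critical values of }R\}=\{c/a_1^M,\,c/a_2^M,\,\infty\}$, which has cardinality three for a generic choice of $c,a_1,a_2$. For $g$ itself I would use Bishop-type quasiconformal folding applied to a tree whose size distribution encodes the order~$\rho$ and whose combinatorial symmetry forces the number of singular values of the resulting $g$ to drop from the generic Eremenko--Lyubich value down to two; care is also taken that $0$ is not an asymptotic value of~$g$, so that $\infty$ is not an asymptotic value of~$f$ and Theorem~\ref{thma} applies.

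For the lower bound I would follow the iterated-function-system argument from \cite[Theorem~1.2]{Bergweiler2012}, adapted to the present model. Pick a logarithmic tract $U$ of $f$ over $\infty$ containing a pole $p$ well inside it. Since $f$ behaves like $c_0(z-p)^{-M}$ near~$p$, each of the $M$ inverse branches of $f$ mapping a neighbourhood of $\infty$ into a small disk around~$p$ has derivative of size $\asymp|w|^{-1-1/M}$. Concatenating such branches with inverse branches that follow an escaping orbit through the tract, and using the tract geometry controlled by $\rho$ (in particular $\log|z|\asymp|z|^{\rho}$ inside~$U$), one builds an iterated function system of uniformly contracting conformal maps whose limit set is contained in~$I(f)$; a Moran/Bowen computation then yields Hausdorff dimension exactly $2M\rho/(2+M\rho)$.

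The main obstacle is producing the auxiliary function $g$: standard quasiconformal folding yields functions only in $\B$, so forcing the output into $\Speiser_2$ with prescribed order, simple zeros, and suitable tract geometry requires delicate combinatorial restrictions on the underlying tree, together with a check that the quantitative tract estimates survive the quasiconformal correction coming from the measurable Riemann mapping theorem. A secondary difficulty is to verify that the IFS estimates of \cite{Bergweiler2012}, originally written for essentially explicit model functions in~$\B$, still apply to the less explicit $g$ supplied by folding; this amounts to checking uniform Koebe-type distortion bounds on the tracts of $g$ and uniform separation of the three singular values of $f$ from the escaping orbits used to build the IFS.
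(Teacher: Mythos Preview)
Your construction has two genuine gaps that prevent it from producing a function in $\Speiser_3$ of every order $\rho\in[0,\infty)$.

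First, the singular-value count is off. If $g$ is a transcendental entire function, then $\infty$ is always an asymptotic value of~$g$ (there is a path to $\infty$ along which $|g|\to\infty$). Hence for $R(w)=c/w^M$ the composition $f=R\circ g$ has $R(\infty)=0$ as an asymptotic value. Together with the two images $c/a_1^M,\,c/a_2^M$ of the critical values of $g$ and, for $M\ge 2$, the value $\infty$ coming from the multiple poles, you obtain $\sing(f^{-1})=\{c/a_1^M,\,c/a_2^M,\,0,\,\infty\}$, so $f\in\Speiser_4$ rather than $\Speiser_3$. Your displayed inclusion simply omits the asymptotic value~$0$.

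Second, and more fundamentally, an entire function in $\B$ (in particular in any $\Speiser_q$) has order at least~$1/2$; this is a standard consequence of the Ahlfors distortion theorem applied to a logarithmic tract over~$\infty$. Since $\rho(R\circ g)=\rho(g)$ for rational~$R$, your scheme cannot reach any $\rho<1/2$, and in particular not $\rho=0$. Bishop's folding does produce Speiser-class functions with two critical values, but it cannot defeat this lower bound on the order.

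The paper avoids both obstacles by a different decomposition. It first builds a meromorphic function $F\in\Speiser_3$ of order~$0$, namely $F(z)=H(\arcsin z)$ with $H=G^M$ for a suitable elliptic function~$G$; the three critical values are $0,1,\infty$ (or $0,1,a$ when $M=1$), and all poles already have multiplicity~$M$. For $\rho>0$ it then composes with an entire function $g$ of order $\rho/2$ constructed by the MacLane--Vinberg comb method. The key point is that $g$ is \emph{not} in the Speiser class: it has infinitely many critical values, but these are deliberately chosen to be \emph{critical points} of~$F$, so that $f=F\circ g$ inherits exactly the three critical values of~$F$; a Wiman-type argument shows $g$ has no asymptotic value. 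The asymptotics of $g$ (and hence of the residues $b_j$ at the poles of~$f$) are controlled via Warschawski's angular-derivative theorem, and the lower bound $\dim I(f)\ge 2M\rho/(2+M\rho)$ follows by showing that the Mayer--Urba\'nski series $\sum_j(|b_j|/|a_j|^{1+1/M})^t$ diverges at $t=2M\rho/(2+M\rho)$, rather than by building an IFS directly.
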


The following result is an analogue of Theorem~\ref{theorem1} for the case $\rho=\infty$.
\begin{theorem}\label{theorem2}
Let $M\in\N$.
Then there exists a function $f\in \Speiser_3$ of infinite order
such that all poles of $f$ have multiplicity $M$ and
$\dim I(f)=2$.
\end{theorem}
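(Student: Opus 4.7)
The plan is to mimic the construction underlying Theorem~\ref{theorem1} across infinitely many scales, so as to force infinite order while keeping the singular set of cardinality three and every pole of multiplicity~$M$. Fix a sequence $\rho_{n}\nearrow\infty$ and a rapidly increasing sequence of radii $r_{n}\to\infty$. On each annulus $A_{n}=\{r_{n}\leq|z|\leq r_{n+1}\}$ I would lay down the same combinatorial tract pattern that, in the proof of Theorem~\ref{theorem1}, produces a function in $\Speiser_{3}$ of order $\rho_{n}$ with pole multiplicity~$M$. Since the three distinguished vertex colours (corresponding to the prospective singular values) agree on every $A_{n}$, the annular pieces concatenate into a single Speiser graph $G$ on~$\C$.

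The graph $G$ would then be realized as the Speiser graph of a meromorphic function by the standard quasiconformal surgery: build a quasiregular $F\colon\C\to\CC$ whose Speiser graph is $G$, and straighten its Beltrami coefficient via the measurable Riemann mapping theorem, producing a quasiconformal $\varphi$ and setting $f=F\circ\varphi^{-1}$. Since post-composition preserves the image of the singular set of~$F$, we obtain $f\in\Speiser_{3}$; the local model around each face forces every pole of $f$ to have multiplicity exactly~$M$. Infinite order follows because, by design, the pole counting function satisfies $n(r_{n},f)\gtrsim r_{n}^{\rho_{n}}$ for every~$n$. For the dimension bound, I would re-run the Cantor-type construction from the proof of Theorem~\ref{theorem1} inside each annulus $A_{n}$, obtaining a subset $E_{n}\subset I(f)$ with $\dim E_{n}\geq 2M\rho_{n}/(2+M\rho_{n})$; letting $n\to\infty$ gives $\dim I(f)\geq 2$, and the reverse inequality is trivial.

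The main obstacle is the surgery step: one must arrange that the Beltrami coefficient of $F$ is supported in thin annular interface regions around each $\partial A_{n}$ and has $L^{\infty}$ norm globally strictly less than one, while ensuring that the straightening map $\varphi$ distorts the annuli $A_{n}$ only mildly, so that the Cantor subsets $E_{n}$ defined in terms of the model $F$ transfer to subsets of $I(f)$ of the same Hausdorff dimension. This is standard but delicate, and is handled by choosing the $r_{n}$ to grow fast enough that each interface annulus can be given arbitrarily large modulus, yielding a globally bounded and asymptotically conformal dilatation.
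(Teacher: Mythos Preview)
Your proposal takes a very different route from the paper and has genuine gaps.

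The paper's proof is almost a one-liner: with $H=G^{M}$ the elliptic function built in Section~\ref{prooftheorem}, one simply sets $f(z)=H(e^{z})$. This is automatically in $\Speiser_{3}$ with all poles of multiplicity~$M$, and it has infinite order because an elliptic function composed with $e^{z}$ does. For $\dim I(f)=2$ the paper computes the Mayer--Urba\'nski series from Lemma~\ref{mayer-urbanski} explicitly: the poles are $a_{m,n,k}=\log p_{m,n}+2\pi i k$ with $p_{m,n}$ ranging over a lattice, the corresponding $b_{m,n,k}=\beta/p_{m,n}$, and an elementary estimate shows the series diverges for every $t<2$. No surgery, no gluing, no Cantor sets.

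By contrast your argument rests on ingredients that are either absent from the paper or not justified. First, you appeal to a ``Cantor-type construction from the proof of Theorem~\ref{theorem1}'', but there is no such construction: the lower bound in Theorem~\ref{theorem1} is obtained by showing divergence of the series in Lemma~\ref{mayer-urbanski}, not by exhibiting explicit Cantor subsets of $I(f)$. Second, even granting a workable gluing, the escaping set is a \emph{global} dynamical object: orbits of points in $I(f)$ leave every bounded annulus, so a subset built from the local combinatorics on $A_{n}$ alone is not obviously contained in $I(f)$ of the glued map, and you give no mechanism for controlling orbits across the successive $A_{m}$ with their differing asymptotics. Third, the surgery itself is not ``standard'' here: you must control the Beltrami coefficient uniformly across infinitely many interfaces whose adjacent models have diverging complexity (orders $\rho_{n}\to\infty$), and simultaneously ensure the straightening $\varphi$ preserves the pole asymptotics well enough to salvage the dimension estimates. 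None of this is impossible in principle, but each step needs a real argument, and the paper deliberately avoids quasiconformal methods (see the remark after Corollary~\ref{cor1}).

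In short: the direct composition $H(e^{z})$ plus the Mayer--Urba\'nski lemma does everything you need with no surgery at all.
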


Our results can be seen as a contribution to recent efforts to understand the 
differences and similarities between the classes $\B$ and~$\Speiser$.
We refer to~\cite{AspenbergCui,AspenbergCui2,BergweilerCui,Bishop2015,Bishop2015a,Bishop2017,Epstein2015} and the survey~\cite{Sixsmith2018} for results in this direction.
As an example we mention the recent result of Albrecht and Bishop~\cite{Albrecht2020} who showed
that given $\delta>0$ there exists an entire function $f\in\Speiser$ such that $1<\dim J(f)<1+\delta$.
Entire functions in $\B$ with this property had been constructed previously 
by Stallard~\cite{Stallard1997}.

As a consequence of our main result, we also have the following theorem which completes the study begun in~\cite{AspenbergCui} and continued in~\cite{AspenbergCui2}.

\begin{corollary}\label{cor1}
For any $d\in [0,2]$ there exists a function $f\in \Speiser_3$ such that $\dim I(f)=d$.
\end{corollary}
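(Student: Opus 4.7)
The plan is to derive Corollary \ref{cor1} as an immediate consequence of Theorems \ref{theorem1} and \ref{theorem2}, by choosing parameters in those theorems so as to cover every value $d \in [0,2]$.

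For the endpoint $d=2$, I would simply invoke Theorem \ref{theorem2} (with, say, $M=1$), which already produces a function $f \in \Speiser_3$ of infinite order with $\dim I(f) = 2$.

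For $d \in [0,2)$, I would apply Theorem \ref{theorem1} after inverting the formula $d = \frac{2M\rho}{2+M\rho}$ for the order. Rearranging yields $M\rho(2-d) = 2d$, i.e.\ $M\rho = \frac{2d}{2-d}$. Fixing $M=1$ and setting $\rho = \frac{2d}{2-d}$ gives a value in $[0,\infty)$ precisely when $d \in [0,2)$, so Theorem \ref{theorem1} supplies a function $f \in \Speiser_3$ of order $\rho$ with $\dim I(f) = d$.

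In short, the corollary is purely a parameter inversion and there is no genuine obstacle: all of the substantive dynamical content is already packaged inside Theorems \ref{theorem1} and \ref{theorem2}. One could equivalently fix any other $M \in \N$ and solve for $\rho$ accordingly, which merely reflects the fact that the function $(M,\rho) \mapsto \frac{2M\rho}{2+M\rho}$ maps $\N \times [0,\infty)$ onto $[0,2)$ many times over.
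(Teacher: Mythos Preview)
Your proposal is correct and matches the paper's approach: the corollary is stated as an immediate consequence of Theorems~\ref{theorem1} and~\ref{theorem2}, with no separate proof given, and the parameter inversion you describe is exactly the intended reading. Your algebra is correct, and the observation that any $M\in\N$ works is a reasonable additional remark.
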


If $f\in\Speiser_2$, then $f$ has the form $f(z)=L(e^{az})$ where $a\in\C\setminus\{0\}$ and
$L$ is a fractional linear transformation. This implies that 
if $f\in\Speiser_2$, then $\dim I(f)$ is either $1/2$ or $2$; see~\cite{Kotus2018} and~\cite{McMullen1987}. So the least number of singular values to attain each possible Hausdorff dimension of escaping sets in the Speiser class is~$3$. 

A major difference to \cite{AspenbergCui, AspenbergCui2} is that we do not use quasiconformal surgery.
Our main tools are a recent result of Mayer and Urba\'nski~\cite{Mayer2021}
on the dimension of escaping sets, a method of MacLane~\cite{MacLane1947}
 and Vinberg~\cite{Vinberg1989} to construct real 
entire functions with preassigned critical values, and a result of 
Warschawski~\cite{Warschawski1967} on the angular derivative.

It was shown in~\cite{BergweilerCui} that for all $d\in (0,2]$ there exists a function $f\in \Speiser_3$ with $\dim J(f)=d$. Note that one has $I(f)\subset J(f)$ if $f\in\B$, as shown by Eremenko and Lyubich~\cite[Theorem~1]{Eremenko1992}
for entire $f$ and Rippon and Stallard~\cite[Theorem A]{Rippon1999} for meromorphic~$f$.
We also have $\dim J(f)>0$ for every transcendental meromorphic function $f$ by a result of Stallard~\cite{Stallard1994}.

Combining the methods of~\cite{BergweilerCui} with those of the present paper we obtain the following 
generalization of Corollary~\ref{cor1}.
\begin{theorem}\label{theorem3}
Let $0\leq d_I\leq d_J\leq 2$ and $d_J>0$.
Then there exists a function $f\in \Speiser_3$ such that
$\dim I(f)=d_I$ and $\dim J(f)=d_J$.
\end{theorem}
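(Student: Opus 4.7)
The plan is to merge the construction underlying Theorem~\ref{theorem1} with the construction of~\cite{BergweilerCui} to produce a single function $f\in\Speiser_3$ realizing both prescribed dimensions simultaneously. Both constructions rest on the MacLane--Vinberg scheme for building real (meromorphic) functions with preassigned critical values, together with the analysis of logarithmic tracts via Warschawski's angular derivative theorem; this common framework is what makes a merger feasible.

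I would first dispose of two boundary cases. When $d_I=d_J$, Corollary~\ref{cor1} already supplies $f\in\Speiser_3$ with $\dim I(f)=d_I$; since $I(f)\subset J(f)$ this gives $\dim J(f)\geq d_I$, and for $d_I>0$ the matching upper bound should follow from the same tract geometry that underlies the proof of Theorem~\ref{theorem1}. When $d_I=0$ and $d_J>0$, one invokes \cite{BergweilerCui} directly, checking (and if necessary slightly adjusting the parameters of the construction) that the function produced there also has vanishing escaping set dimension, which is consistent with the behaviour expected from Theorem~\ref{thma} in the slow-growth regime.

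For the main range $0<d_I<d_J$, the plan is to build $f\in\Speiser_3$ whose geometry simultaneously exhibits two features. Feature (i) is a pole structure of the type used in Theorem~\ref{theorem1}, with poles of the prescribed multiplicity $M$ placed at an asymptotic density tuned so that the Mayer--Urba\'nski dimension formula from~\cite{Mayer2021} pins $\dim I(f)$ at $d_I$. Feature (ii) is an additional Cantor-like fractal structure in the Julia set, inserted on scales where the iterates of $f$ do not escape to infinity and tuned, as in~\cite{BergweilerCui}, to have Hausdorff dimension exactly $d_J$. The MacLane--Vinberg procedure is flexible enough to accommodate both features inside a single real meromorphic function with only three singular values, since it places no restriction on the combinatorial pattern of critical values beyond their finiteness.

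The main obstacle is to certify that the two features genuinely yield dimensions $d_I$ and $d_J$ and do not interfere. On one hand, the extra structure contributing to $J(f)$ of dimension $d_J$ must sit in a region whose orbits are recurrent rather than escaping, so that it enlarges $J(f)\setminus I(f)$ rather than $I(f)$; on the other hand, the pole configuration producing $\dim I(f)=d_I$ must not spawn fresh fractal components of larger dimension in $J(f)$. Keeping the total singular value count at three further forces the two sub-constructions to share the same three critical/asymptotic values, and the bookkeeping required for this, together with verifying that the merged function still satisfies the order and tract hypotheses needed for the Warschawski and Mayer--Urba\'nski estimates to apply, is the heart of the argument.
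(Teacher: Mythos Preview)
Your outline has a genuine gap in the case $d_I=d_J\in(0,2)$. You write that ``the matching upper bound should follow from the same tract geometry that underlies the proof of Theorem~\ref{theorem1},'' but Theorem~\ref{thma} bounds only $\dim I(f)$, not $\dim J(f)$, and for the function of Theorem~\ref{theorem1} the critical series~\eqref{5c} \emph{diverges} at $t=2M\rho/(2+M\rho)$ --- that divergence is precisely how the lower bound $\dim I(f)\geq t$ is obtained. To force $\dim J(f)\leq t$ the paper must modify the construction so that $\log M(r,g)\sim r^\alpha/(\log r)^{2q}$ (replacing $e^{\alpha z}$ by $e^{\alpha z}/(z^2+c^2)^q$ in the change of variable), invoke the new Lemma~\ref{lemma2b} to get convergence of~\eqref{5c} at the endpoint, then rescale $f_\lambda(z)=f(\lambda z)$ so the series sums to less than~$1$ and a direct covering argument bounds $\dim J(f_\lambda)$. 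This is substantial work and is not implicit in Theorem~\ref{theorem1}.

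For $0<d_I<d_J<2$ your route also differs from the paper's, and is considerably vaguer. The paper does not graft a separate Cantor repeller onto the construction; instead it builds a one-parameter family $f_\lambda(z)=F_\kappa(g_N(\lambda z))$ in which the MacLane--Vinberg data are unchanged for $|k|>N$ (so $\dim I(f_\lambda)=d_I$ for every~$\lambda$) but set to $(-1)^k$ for $|k|\leq N$, making $g_N\to\cos$ on compacta and $f_1$ close to an elliptic function. Continuity of $\lambda\mapsto\dim J(f_\lambda)$, the limit $d_I$ as $\lambda\to 0$, and the Kotus--Urba\'nski bound $\dim J(f_1)>2M/(1+M)>d_J$ then give the desired $\lambda$ by the intermediate value theorem. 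Your plan to ``insert'' a non-escaping Cantor set of prescribed dimension sharing the same three singular values supplies no mechanism for doing so, and it is not clear one exists within the MacLane--Vinberg framework. The paper also treats $d_J=2$ separately, by an affine post-composition sending the two finite critical values of the Theorem~\ref{theorem1} function onto poles, which forces $J=\C$.
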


\section{Preliminary results} \label{prelim}
\subsection{Dimension of the escaping set} \label{subsec1}
Let $f\in\B$ and $M\in\N$ be as in Theorem~\ref{thma}.
Since $\infty$ is not an asymptotic value of~$f$, Iversen's theorem~\cite[Chapter~5, Theorem 1.1]{Goldberg2008} yields that 
$f$ has infinitely many poles.
Let $(a_j)$ be the sequence  of poles of $f$ and let $m_j$  be the   multiplicity of~$a_j$.
Choosing $M$ minimal we thus have
\begin{equation} \label{5a0}
M=\limsup_{j\to\infty} m_j< \infty.
\end{equation}

Let $b_j\in \C \setminus\{0\}$ be such that
\begin{equation} \label{5a}
f(z)\sim\left(\frac{b_j}{z-a_j}\right)^{m_j} \quad \text{as}\ z\to a_j .
\end{equation}
One of the ingredients in the proof of Theorem~\ref{thma} 
is the following result~\cite[Lemma~3.1]{Bergweiler2012}.
\begin{lemma}\label{lemma2}
Let $f$, $(a_j)$, $(b_j)$ and $M$ be as above.
If 
\begin{equation} \label{5b}
t>  \frac{2M \rho(f)}{2+ M \rho(f)},
\end{equation}
then
\begin{equation} \label{5c}
\sum_{j=1}^\infty\left(\frac{|b_j|}{|a_j|^{1+1/M}}\right)^t < \infty .
\end{equation}
\end{lemma}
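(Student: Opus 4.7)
The plan is to combine a geometric area estimate on the preimages of neighborhoods of $\infty$ under $f$ with Nevanlinna's bound on the counting function of poles, and then to extract convergence of \eqref{5c} via Hölder's inequality and a dyadic decomposition.

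First, using the hypotheses $f\in\B$ and that $\infty$ is not an asymptotic value of $f$, I would show that for every sufficiently large $R$ the preimage $f^{-1}(\{|w|>R\})$ decomposes as a disjoint union of bounded simply connected neighborhoods $U_j$ of the poles $a_j$, with $f|_{U_j}\colon U_j\to\{|w|>R\}$ a proper map of degree $m_j$. The asymptotic \eqref{5a}, combined with the compactness of $\sing(f^{-1})\setminus\{\infty\}$, yields uniform control of the shape of $U_j$: for $R$ large there exist absolute constants $c,C>0$ such that $U_j\subset D(a_j,C|b_j|R^{-1/m_j})$ and the Lebesgue area of $U_j$ is at least $c|b_j|^2 R^{-2/m_j}$. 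Since the $U_j$ are pairwise disjoint and, for $R$ large, contained in $D(0,2r)$ whenever $|a_j|\le r$, summing areas gives the key geometric inequality
\begin{equation}
\sum_{|a_j|\le r}\frac{|b_j|^2}{R^{2/m_j}}\le C r^2
\end{equation}
for all sufficiently large $r$. By \eqref{5a0} only finitely many $j$ have $m_j>M$, so for each fixed $\nu\in\{1,\ldots,M\}$ one obtains $\sum_{|a_j|\le r,\,m_j=\nu}|b_j|^2\le CR^{2/\nu}r^2$.

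On the analytic side, the hypothesis $\rho(f)=\rho$ together with Nevanlinna's first main theorem gives $n(r,f)\le r^{\rho+\varepsilon}$ for any $\varepsilon>0$ and all large $r$. Applying Hölder's inequality with exponents $2/t$ and $2/(2-t)$ on the dyadic annulus $\{2^k\le|a_j|<2^{k+1},\,m_j=\nu\}$ and combining with the two estimates above, one bounds $\sum|b_j|^t$ there by $CR^{t/\nu}\,2^{kt+k(\rho+\varepsilon)(2-t)/2}$. Dividing by $2^{k(1+1/M)t}$ reduces the question to the summability of a geometric series with ratio $2^{-t/M+(\rho+\varepsilon)(2-t)/2}$; a direct manipulation shows the exponent is negative precisely when $t>2M(\rho+\varepsilon)/(2+M(\rho+\varepsilon))$. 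Since \eqref{5b} is a strict inequality, choosing $\varepsilon$ small enough yields convergence, and summing over the $M$ values of $\nu$ together with the finite exceptional set of poles with $m_j>M$ completes the proof.

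The main technical hurdle is the preimage analysis in the first step: establishing uniform comparability of $U_j$ to the disk $D(a_j,|b_j|R^{-1/m_j})$ requires the boundedness of $\sing(f^{-1})\setminus\{\infty\}$, to rule out unbounded components or other pathologies in $f^{-1}(\{|w|>R\})$, together with the absence of $\infty$ as an asymptotic value, so that this preimage is exhausted by neighborhoods of poles with no additional tracts. Once this geometric picture is in place, the remaining area, Hölder, and dyadic summation steps are essentially bookkeeping.
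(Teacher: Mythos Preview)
Your proposal is correct and follows essentially the same approach as the original proof in \cite[Lemma~3.1]{Bergweiler2012} (which the paper cites without reproducing, but whose structure is visible in the paper's proof of Lemma~\ref{lemma2b}): the area estimate $\sum_{|a_j|\le r}|b_j|^2\le C r^2$ coming from the preimage geometry of $f^{-1}(B(R))$, the dyadic decomposition in $|a_j|$, H\"older's inequality with exponents $2/t$ and $2/(2-t)$, and the Nevanlinna bound on $n(r,f)$. Your additional stratification over the multiplicities $\nu\le M$ is a harmless refinement; the original argument works directly with the bound $m_j\le M$ for all but finitely many~$j$.
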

The following result of Mayer and Urba\'nski~\cite{Mayer2021} underlines
the relevance of this series.
\begin{lemma}\label{mayer-urbanski}
Let $f$, $(a_j)$, $(b_j)$ and $M$ be as before.
Then $\dim I(f)$ is the infimum of the set of all $t>0$ for which~\eqref{5c} holds.
\end{lemma}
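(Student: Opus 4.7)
The plan is to prove the two inequalities $\dim I(f)\leq t_0$ and $\dim I(f)\geq t_0$ separately, where $t_0$ denotes the infimum in the statement.

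For the upper bound I would essentially repackage the proof of Theorem~A from~\cite{Bergweiler2012}, noting that it uses~\eqref{5c} as its only quantitative input. The argument is a standard covering: choose $R>\sup\{|s|:s\in\sing(f^{-1})\setminus\{\infty\}\}$, so that every branch of $f^{-1}$ taking values in $\{|w|>R\}$ is univalent. Near each pole $a_j$ one such branch sends $\{|w|>R\}$ to a topological disk which, by~\eqref{5a} together with Koebe distortion, is comparable to the round disk of radius $|b_j|/R^{1/m_j}$. Iterating $n$ times produces a cover of $\{z\in I(f):|f^k(z)|>R\text{ for }k=1,\dots,n\}$ whose $t$-dimensional Hausdorff sum is controlled by $C^n\bigl(\sum_j(|b_j|/|a_j|^{1+1/M})^t\bigr)^n$ for some constant $C$; choosing $t$ slightly above $t_0$ and letting $n\to\infty$ gives $\dim I(f)\leq t$.

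For the lower bound I would construct a conformal infinite iterated function system from the inverse branches of $f$ at its poles and invoke the Bowen-type formula of Mauldin--Urba\'nski for the dimension of its limit set. Concretely, pick $R_1<R_2$ with $\Omega:=\{R_1<|w|<R_2\}$ disjoint from $\sing(f^{-1})$. For each pole $a_j$ of multiplicity $m_j=M$ with $|a_j|$ large, let $\phi_j$ be the branch of $f^{-1}$ mapping $\Omega$ into a topological annulus around $a_j$. By~\eqref{5a} and Koebe, $\phi_j$ extends conformally to a definite neighbourhood of $\Omega$ and satisfies $\|\phi_j'\|_\infty\asymp |b_j|/|a_j|^{1+1/M}$. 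For $|a_j|$ large enough the images $\phi_j(\Omega)$ are pairwise disjoint and contained in $f^{-1}(\Omega)$, so the IFS $\{\phi_j\}$ generates a limit set $\Lambda\subset I(f)$. The Mauldin--Urba\'nski theorem then identifies $\dim\Lambda$ with $\inf\{t:\sum_j\|\phi_j'\|_\infty^t<\infty\}$, and the derivative estimate matches this to $t_0$.

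The principal obstacle is the distortion estimate for $\phi_j$: the asymptotic~\eqref{5a} is only local near $a_j$, while the IFS argument needs a uniform Koebe-type bound on all of $\Omega$. One has to interpose an intermediate scale—a disk $D_j$ around $a_j$ on which~\eqref{5a} is sharp and $f|_{D_j}$ behaves like $(b_j/(z-a_j))^M$—and analytically continue the corresponding inverse branch outward to a neighbourhood of $\Omega$, using the absence of singular values there. A secondary issue is that poles with $m_j<M$ have to be discarded from the IFS; by the definition of $M$ in~\eqref{5a0} they form an asymptotically sparse subsequence, and one must check that removing them does not change the infimum $t_0$. Finally, the separation of the $\phi_j(\Omega)$ is a routine consequence of the fact that the pole sequence is discrete.
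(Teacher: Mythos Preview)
The paper does not prove this lemma at all: it is quoted as a result of Mayer and Urba\'nski~\cite{Mayer2021}, so there is no in-paper argument to compare your sketch against. That said, your sketch does not reproduce their proof, and the lower-bound half has a genuine gap.

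Your upper bound is essentially the Bergweiler--Kotus argument, but note that the bound $C^n\bigl(\sum_j(|b_j|/|a_j|^{1+1/M})^t\bigr)^n$ only tends to~$0$ when $C$ times the series is below~$1$; merely knowing the series is finite for $t>t_0$ is not enough. One has to restrict first to poles with $|a_j|>R$ so that the \emph{tail} series is small, obtain the bound for $\{z:|f^k(z)|>R\text{ for all }k\}$, and then recover $I(f)$ by countable stability. This is routine but should be said.

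The lower bound is where the proposal breaks. First, your IFS is not well-posed: with $\Omega=\{R_1<|w|<R_2\}$ bounded and $a_j\to\infty$, the images $\phi_j(\Omega)$ lie near $a_j$ and hence \emph{outside} $\Omega$ for all but finitely many~$j$, so $\{\phi_j\}$ is not an iterated function system on~$\Omega$. Passing to $\{|z|>R\}$ on the sphere repairs this, but then the spherical derivative of $\phi_j$ at a point $w$ near some other pole $a_k$ is of order $|b_j|\,|a_k|^{1-1/M}/|a_j|^{2}$, not $|b_j|/|a_j|^{1+1/M}$; the contraction ratios depend on both source and target, and the Bowen formula no longer reduces to the single series in~\eqref{5c} without further work. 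Second, and more seriously, the limit set $\Lambda$ of any autonomous IFS of this type consists of points whose forward orbit stays in the fixed domain---such points need not escape to~$\infty$, so $\Lambda\subset I(f)$ is unjustified. Mayer and Urba\'nski deal with this by a genuinely non-autonomous construction (choosing at stage $n$ only inverse branches at poles in a prescribed, growing annulus, which forces $|f^n(z)|\to\infty$) together with a pressure/Bowen-type analysis adapted to that setting; this is the substantive content of~\cite{Mayer2021} and is not captured by a single application of Mauldin--Urba\'nski for a fixed conformal IFS.
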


For the proof of Theorem~\ref{theorem3} we will require the following
addendum to Lemma~\ref{lemma2}.
\begin{lemma}\label{lemma2b}
Let $f$, $(a_j)$, $(b_j)$ and $M$ be as before.
Suppose that
\begin{equation} \label{5f}
T(r,f)=\O\!\left(\frac{r^{\rho}}{(\log r)^p}\right)  
\end{equation}
for some 
\begin{equation} \label{5f1}
p>\frac{4+M\rho}{2}.
\end{equation}
Then~\eqref{5c} also holds for
\begin{equation} \label{5e1}
t=  \frac{2M \rho}{2+ M \rho}.
\end{equation}
\end{lemma}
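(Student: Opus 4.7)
The approach is to revisit the proof of Lemma~\ref{lemma2} from \cite{Bergweiler2012}, but to keep the Nevanlinna characteristic $T(r,f)$ as a tracked quantity throughout instead of substituting its order-$\rho$ bound at the first opportunity. That proof partitions the poles into dyadic annuli $A_k = \{j : 2^k \le |a_j| < 2^{k+1}\}$, estimates the Laurent coefficient $b_j^{m_j}$ via the Poisson--Jensen formula on a disk of radius comparable to $|a_j|$, and bounds the cardinality $|A_k|$ via $n(r,f) \le T(er,f) + O(1)$. Combining these through Hölder's inequality, with weights calibrated to the critical exponent $t_\ast := 2M\rho/(2+M\rho)$, yields a bound of the form
\[
\sum_{j \in A_k} \frac{|b_j|^t}{|a_j|^{t(1+1/M)}} \,\le\, C\, r_k^{-\eta(t)}\, T(c r_k, f)^{\mu(t)},
\]
where $c>1$ is an absolute constant and $\eta,\mu$ are explicit nonnegative linear functions of $t$. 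The identity $\eta(t_\ast) = \rho\, \mu(t_\ast)$ is precisely what makes $t_\ast$ the borderline exponent in Lemma~\ref{lemma2}.

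Under the stronger hypothesis $T(cr_k,f) = O(r_k^\rho/(\log r_k)^p)$ this bound becomes
\[
\sum_{j\in A_k} \frac{|b_j|^t}{|a_j|^{t(1+1/M)}} \,\le\, C\, r_k^{-\eta(t)+\rho\mu(t)}\, (\log r_k)^{-p\mu(t)},
\]
so at $t = t_\ast$ the polynomial-in-$r_k$ factor is trivial, and the convergence of \eqref{5c} reduces to that of $\sum_k (k \log 2)^{-p\,\mu(t_\ast)}$, which holds precisely when $p\, \mu(t_\ast) > 1$. A direct computation of the Hölder weights should yield $\mu(t_\ast) = 2/(4+M\rho)$, so that the summability criterion $p\, \mu(t_\ast) > 1$ translates into exactly the hypothesis $p > (4+M\rho)/2$ appearing in~\eqref{5f1}.

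The main obstacle lies in this bookkeeping: one must choose Hölder weights that simultaneously reproduce the threshold $t_\ast$ of the original argument (that is, force $\eta(t_\ast) = \rho\mu(t_\ast)$) and yield the claimed value $\mu(t_\ast) = 2/(4+M\rho)$. This forces a careful balance between the two places where the characteristic enters, namely the pointwise bound on $|b_j|^{m_j}$ and the count of poles in $A_k$. Once the weights are pinned down, no new analytic ingredient beyond those already used in the proof of Lemma~\ref{lemma2} should be required, and the conclusion for $t = t_\ast$ follows by comparison of the resulting series with a convergent $p$-series in $k$.
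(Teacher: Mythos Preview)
Your plan matches the paper's proof: dyadic annuli $P(l)=\{j:2^l\le|a_j|<2^{l+1}\}$, H\"older on each block sum $S_l$, and the refined hypothesis on $T$ to make $\sum_l S_l$ converge at $t=t_\ast$. Two corrections to the details you sketch. First, in the paper's argument the characteristic enters \emph{only} through the pole count $n(r)\le T(er,f)+O(1)$ in the second H\"older factor; the first factor is controlled by the class-$\mathcal B$ estimate $\sum_{j\in P(l)}|b_j|^2\le 36R^2\cdot 2^{2(l+1)}$ (with $R$ a fixed constant, from \cite{Bergweiler2012}), not by Poisson--Jensen, so it contributes no power of $T$. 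Second, the exponent on $l$ that emerges is $(p-1)(2-t_\ast)/2$, not $p\cdot 2/(4+M\rho)$: an integration by parts bounding $\sum_{j\in P(l)}|a_j|^{-\rho}$ via $\int_{2^l}^\infty n(t)\,t^{-\rho-1}\,dt$ costs one power of $l$, and $(2-t_\ast)/2=2/(2+M\rho)$, not $2/(4+M\rho)$. Both expressions happen to equal $1$ exactly at $p=(4+M\rho)/2$, so your predicted threshold is correct even though your value of $\mu(t_\ast)$ is not.
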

\begin{proof}
We proceed as in the proof of~\cite[Lemma~3.1]{Bergweiler2012},
but with $t$ given by~\eqref{5e1}.
In the terminology used there we then have $s=1$ and $\alpha=\rho$.
Let $n(r)=n(r,f)$ be the number of poles of $f$ in $\{z\colon |z|\leq r\}$.
As in~\cite{Bergweiler2012} we put
\begin{equation} \label{5e2}
P(l)=\left\{j\in\N:  n\left(2^l\right) \leq j < n\left(2^{l+1}\right)\right\}
= \left\{j\in\N:  2^l\leq |a_j|< 2^{l+1} \right\}
\end{equation}
and
\begin{equation} \label{5e3}
S_l = \sum_{j\in P(l)}\left(\frac{|b_j|}{|a_j|^{1+1/M}}\right)^t
= \sum_{j\in P(l)}\left(\frac{|b_j|}{|a_j|}\right)^t
\left(\frac{1}{|a_j|}\right)^{t/M}.
\end{equation}
H\"older's inequality then yields that
\begin{equation} \label{5g}
S_l\leq \left( \sum_{j\in P(l)}
\frac{|b_j|^2}{|a_j|^{2}}\right)^{t/2} \left( \sum_{j\in P(l)}
\frac{1}{|a_j|^{\rho}}\right)^{(2-t)/2}.
\end{equation}
The first term on the right hand side of~\eqref{5g} we estimate similarly as in~\cite{Bergweiler2012}. We find that
\begin{equation} \label{5h}
\begin{aligned}
\left(\sum_{j\in P(l)} \frac{|b_j|^2}{|a_j|^{2}}\right)^{t/2} 
&
\leq \frac{1}{(2^{2l})^{t/2}} \left( \sum_{j\in P(l)} |b_j|^2\right)^{t/2}
\\ &
\leq \frac{1}{2^{lt}} \left(36 R^2 2^{2(l+1)} \right)^{t/2}
=(12R)^t.
\end{aligned}
\end{equation}
To estimate the second term on the right hand side of~\eqref{5g},
we note that by standard estimates of Nevanlinna theory we have
\begin{equation} \label{5k}
n(r)\leq \int_r^{er} \frac{n(t)}{t} dt 
\leq N(er,f)\leq T(er,f)
=\O\!\left(\frac{r^{\rho}}{(\log r)^p}\right)  ,
\end{equation}
say $n(r)\leq C r^\rho/ (\log r)^p$ for large $r$ and some constant $C>0$.
Thus
\begin{equation} \label{5l}
\begin{aligned}
 \sum_{j\in P(l)} \frac{1}{|a_j|^{\rho}}
&= \int_{2^l}^{2^{l+1}} \frac{1}{t^{\rho}} dn(t)
= \frac{n(2^{l+1})}{(2^{l+1})^{\rho}}  
- \frac{n(2^{l})}{(2^{l})^{\rho}}  
+\rho\int_{2^l}^{2^{l+1}} \frac{n(t)}{t^{\rho+1}} dt
\\ &
\leq \frac{C}{(\log 2^{l+1})^p}
+C\rho \int_{2^l}^{\infty} \frac{1}{t (\log t)^p} dt
\\ &
= \frac{C}{(\log 2)^p (l+1)^p}
+\frac{C\rho}{(p-1)(\log 2)^{p-1} l^{p-1}}
=\O\!\left( \frac{1}{l^{p-1}}\right).
\end{aligned}
\end{equation}
Hence 
\begin{equation} \label{5m}
\left( \sum_{j\in P(l)} \frac{1}{|a_j|^{\rho}}\right)^{(2-t)/2}
=\O\!\left( \frac{1}{l^{(p-1)(2-t)/2}}\right).
\end{equation}
Combining this with~\eqref{5g} and~\eqref{5h} we thus find that
\begin{equation} \label{5n}
S_l=\O\!\left( \frac{1}{l^{(p-1)(2-t)/2}}\right).
\end{equation}
By~\eqref{5f1} and~\eqref{5e1} we have
\begin{equation} \label{5o}
\frac{(p-1)(2-t)}{2}>\frac12\left( \frac{4+M\rho}{2}-1\right)
\left(2-
\frac{2M \rho}{2+ M \rho}\right) =1 .
\end{equation}
Thus 
\begin{equation} \label{5p}
\sum_{l=0}^\infty S_l<\infty,
\end{equation}
from which the conclusion follows.
\end{proof}

\subsection{Growth of composite meromorphic functions} \label{subsec2}
For the following result we refer to~\cite[Satz 5.7]{Bergweiler1984}
and~\cite[Corollary~4]{Bergweiler1990}.
\begin{lemma}\label{lemma1}
Let $g$ be an entire and $f$ a meromorphic function. Then
\begin{equation} \label{b1}
\rho(g)\liminf_{r\to\infty}\frac{\log T(r,f)}{\log \log r}
\leq
\rho(f\circ g)
\leq
\rho(g)\limsup_{r\to\infty}\frac{\log T(r,f)}{\log \log r}. 
\end{equation}
\end{lemma}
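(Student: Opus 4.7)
The plan is to derive both inequalities of Lemma~\ref{lemma1} by combining two-sided Pólya-type composition estimates
\begin{equation}
T(cM(r/k,g),f)-O(1)\leq T(r,f\circ g)\leq (1+o(1))\,T(M(r,g),f)
\end{equation}
(for suitable constants $c,k>0$) with the classical identity $\limsup_{r\to\infty}\log\log M(r,g)/\log r=\rho(g)$. The upper estimate above is the standard composition bound for meromorphic $f$ and entire $g$, which follows from Nevanlinna's first fundamental theorem applied to $f\circ g$ together with the elementary inequality $m(r,f\circ g)\leq m(M(r,g),f)$ and a comparable bound on the counting function. The lower estimate is a meromorphic analogue of Pólya's inequality; this is the deepest ingredient and is the version one imports from~\cite{Bergweiler1984,Bergweiler1990}.

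For the upper bound of the lemma, I would set $R(r):=M(r,g)$, which tends to $\infty$ monotonically and continuously for large $r$, and write
\begin{equation}
\frac{\log T(r,f\circ g)}{\log r}\leq \frac{\log T(R(r),f)}{\log\log R(r)}\cdot\frac{\log\log M(r,g)}{\log r}+o(1).
\end{equation}
Passing to $\limsup$ and using that $\limsup(a_nb_n)\leq(\limsup a_n)(\limsup b_n)$ for nonnegative sequences gives exactly $\rho(f\circ g)\leq \rho(g)\cdot\limsup_{s\to\infty}\log T(s,f)/\log\log s$, once one checks that $\limsup_r\log T(R(r),f)/\log\log R(r)=\limsup_{s\to\infty}\log T(s,f)/\log\log s$, which is immediate from the monotonicity and eventual continuity of $R(r)$.

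For the lower bound, I would proceed symmetrically from $T(cM(r/k,g),f)\leq T(r,f\circ g)+O(1)$. Choose a sequence $r_n\to\infty$ along which $\log\log M(r_n/k,g)/\log r_n\to\rho(g)$; such a sequence exists by the definition of $\rho(g)$. Writing
\begin{equation}
\frac{\log T(r_n,f\circ g)}{\log r_n}\geq \frac{\log T(cM(r_n/k,g),f)}{\log\log M(r_n/k,g)}\cdot\frac{\log\log M(r_n/k,g)}{\log r_n}+o(1)
\end{equation}
and applying $\liminf(a_nb_n)\geq(\liminf a_n)(\liminf b_n)$ for nonnegative sequences, the second factor tends to $\rho(g)$ while the $\liminf$ of the first factor is at least $\liminf_{s\to\infty}\log T(s,f)/\log\log s$, since $M(r_n/k,g)\to\infty$. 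Since $\rho(f\circ g)$ dominates the $\limsup$, and hence the $\liminf$, along $(r_n)$, the desired lower bound follows.

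The main obstacle will be the lower composition inequality $T(cM(r/k,g),f)\leq T(r,f\circ g)+O(1)$: in the entire case one argues directly with $\log M$, but for meromorphic $f$ one must control the counting function $N(r,f\circ g)$ by tracing preimages of poles of $f$ under $g$ and combining this with the proximity function estimate via the lemma on the logarithmic derivative. Once this Pólya-type lower bound is available, the deduction of both inequalities of Lemma~\ref{lemma1} reduces to the routine $\limsup$/$\liminf$ manipulations described above.
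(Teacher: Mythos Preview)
The paper does not actually prove this lemma: it merely cites \cite[Satz~5.7]{Bergweiler1984} and \cite[Corollary~4]{Bergweiler1990}, and even remarks that the first reference assumes the extra hypothesis $\limsup_{r\to\infty}\log T(r,f)/\log\log r<\infty$, which happens to hold in the application. So there is no in-paper argument to compare against; your sketch already goes further than what the authors supply here.

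Your outline is a faithful reconstruction of the standard route. The upper composition estimate you invoke is precisely Lemma~\ref{lemma1a} of the paper (up to the harmless shift by $2|g(0)|$), and the derivation of the right-hand inequality of~\eqref{b1} via the factorization
\[
\frac{\log T(r,f\circ g)}{\log r}\leq \frac{\log T(R,f)}{\log\log R}\cdot\frac{\log\log M(r,g)}{\log r}+o(1)
\]
is correct. For the left-hand inequality you rightly identify the P\'olya-type lower bound $T(cM(r/k,g),f)\leq T(r,f\circ g)+O(1)$ as the substantive ingredient and, like the paper, defer to \cite{Bergweiler1984,Bergweiler1990} for it; the subsequent $\liminf$ argument along a sequence realizing $\rho(g)$ is sound. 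One minor remark: your one-line justification of the upper composition bound (``$m(r,f\circ g)\leq m(M(r,g),f)$ and a comparable bound on the counting function'') understates the work needed for $N(r,f\circ g)$, which is in fact the nontrivial half of that inequality---but since you ultimately import the result from the same references the paper cites, this is not a gap in your argument.
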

In~\cite[Satz 5.7]{Bergweiler1984} the result is proved under the additional
hypothesis that
\begin{equation} \label{b2}
\limsup_{r\to\infty}\frac{\log T(r,f)}{\log \log r}<\infty.
\end{equation}
This hypothesis will be satisfied in our application.

In the proof of Theorem~\ref{theorem3} we will also use the following
result~\cite[Satz~2.2]{Bergweiler1984}.
\begin{lemma}\label{lemma1a}
Let $g$ be an entire and $f$ a meromorphic function. Then
\begin{equation} \label{b2a}
T(r,f\circ g)\leq (1+o(1)) T(M(r,g)+2|g(0)|,f).
\end{equation}
\end{lemma}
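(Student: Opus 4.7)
The plan is to bound $T(r,f\circ g) = m(r,f\circ g) + N(r,f\circ g)$ in terms of $T(R,f)$ with $R = M(r,g) + 2|g(0)|$, exploiting the geometric fact that $|g(z)|\leq M(r,g) = R - 2|g(0)|$ for $|z|\leq r$. Thus the image of $\{|z|\leq r\}$ under $g$ is contained in $\{|w|\leq R\}$ with a uniform buffer of width $2|g(0)|$ away from the boundary, and this buffer is the geometric source of the sharp factor $1+o(1)$.

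For the counting function, every pole of $f\circ g$ in $\{|z|\leq r\}$ is a $g$-preimage of some pole $w_k$ of $f$ with $|w_k|\leq M(r,g)$, with multiplicity equal to the product of the pole order of $f$ at $w_k$ and the local degree of $g$. Summing these contributions and applying the first fundamental theorem to each $g-w_k$, or more symmetrically using Cartan's averaging identity $T(r,h) = \frac{1}{2\pi}\int_0^{2\pi} N(r,1/(h-e^{i\theta}))\,d\theta + \log^+|h(0)|$ applied to $h = f\circ g$, reduces the bound to a comparison of value-distribution data for $f$ at scales $M(r,g)$ and $R$. For the proximity function, a Poisson--Jensen representation of $\log|f|$ on $\{|w|\leq R\}$, evaluated at $w = g(re^{i\theta})$, writes $\log^+|f(g(re^{i\theta}))|$ as a Poisson integral against the boundary values of $\log^+|f|$ at radius $R$ plus a Blaschke-type correction from the zeros and poles of $f$ inside the disk; integrating in $\theta$ gives a bound in terms of $T(R,f)$.

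The hard part will be turning both of these estimates into inequalities with prefactor $1+o(1)$ rather than merely a bounded constant. Neither the Poisson--Jensen step (whose kernel at distance $2|g(0)|$ from the boundary is controlled only by a factor of order $R/|g(0)|$) nor the naive application of the first fundamental theorem to $g$ (which leaves an extraneous $T(r,g)$ factor) separately delivers the sharp constant. The argument must therefore match the error terms across the two pieces and use that the excess mass of value-distribution data for $f$ living in the annulus $\{M(r,g) < |w| \leq R\}$ is only an $o(T(R,f))$ share as $r\to\infty$, provided $T(R,f)\to\infty$ as is the case in the intended applications; this final absorption is what produces the prefactor $1+o(1)$ asserted in the lemma.
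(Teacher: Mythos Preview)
The paper does not prove this lemma; it merely quotes it as \cite[Satz~2.2]{Bergweiler1984}. There is thus no proof in the paper to compare your attempt against.

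On its own merits, what you have written is a plan, not a proof, and you say so yourself. You correctly name the natural ingredients (Poisson--Jensen for the proximity part, Cartan's identity and the first fundamental theorem for the counting part) and correctly observe that the pointwise Poisson-kernel bound $(R+|w|)/(R-|w|)$ at distance $2|g(0)|$ from the boundary is of order $M(r,g)/|g(0)|$, not $1+o(1)$. But you then only assert that the two error terms ``must'' match up, without exhibiting the mechanism. Your closing remark about absorbing the mass in the annulus $\{M(r,g)<|w|\le R\}$ does not address the actual obstruction: the target is already $T(R,f)$ with $R=M(r,g)+2|g(0)|$, so nothing in that annulus needs to be absorbed; the difficulty is the blow-up of the Poisson kernel when $g(re^{i\theta})$ lies near $|w|=R$, not a discrepancy between the radii $M(r,g)$ and~$R$. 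A route that does produce the sharp constant avoids the pointwise kernel bound altogether: combine Cartan's identity for $T(r,f\circ g)$ with Jensen's formula $N(r,w,g)=\tfrac{1}{2\pi}\int_0^{2\pi}\log|g(re^{i\theta})-w|\,d\theta-\log|g(0)-w|$ and integrate against the value distribution of~$f$; alternatively, use the Ahlfors--Shimizu characteristic and the change-of-variables identity $A(t,f\circ g)=\pi^{-1}\iint n(t,w,g)\,f^{\#}(w)^2\,dA(w)$. Either way an explicit computation is required, and your sketch stops short of it.
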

We note that the stronger inequality
\begin{equation} \label{b2b}
T(r,f\circ g)\leq (1+o(1)) \frac{T(r,g)}{\log M(r,g)}T(M(r,g),f).
\end{equation}
was proved in~\cite[Theorem~1]{Bergweiler1988}, but~\eqref{b2a} suffices for 
our purposes.

\subsection{A method of MacLane and Vinberg} \label{subsec3}
We will use a method of MacLane~\cite{MacLane1947} to construct a real entire
 function with a given sequence of real critical values.
(An entire function $g$ is called real if $g(\R)\subset\R$.)
For an exposition of the method we refer to Eremenko and Sodin~\cite{Eremenko1992a},
who in turn refer to Vinberg~\cite{Vinberg1989}. See also the paper by Eremenko and Yuditskii \cite{Eremenko2012}.
For a brief discussion of the method and an application of it in
complex dynamics we also mention~\cite[Section~5]{Bergweiler2015}.

Let $(c_n)_{n\in\Z}$ be a sequence
satisfying $(-1)^n c_n\geq 0$ for all $n\in\Z$. We consider the simply-connected 
``comb domain''
\begin{equation} \label{b3}
\Omega:=
\C\backslash \bigcup_{n\in\Z} \{x+in\pi\colon -\infty< x\leq \log|c_n|\}.
\end{equation}
Here we put $ \{x+in\pi\colon -\infty< x\leq \log|c_n|\}=\emptyset$ if $c_n=0$.
Assuming that not all $c_n$ are equal to~$0$, we have $\Omega\neq \C$.
Thus there exists a conformal map $\varphi$ mapping the lower half-plane
$\{z\in\C\colon \im z< 0\}$
onto $\Omega$.
We may choose $\varphi$ such that 
$\re \varphi(iy)\to +\infty$ as $y\to-\infty$.

The function $g:=\exp\circ \varphi$ extends continuously to $\R$ such that 
$g(x)\in\R$ for $x\in\R$. The Schwarz reflection principle now yields that $g$ 
extends to a real entire function.
The construction yields that all critical points of $g$ are real and that $(c_k)$ is 
the sequence of critical values of~$g$.

The function $g$ obtained this way belongs to the Laguerre-P\'olya class.
In particular, this implies that $\rho(g)\leq 2$.

\subsection{Conformal mappings of strips} \label{subsec4}
For our application of the MacLane-Vinberg method we will have to consider
the asymptotic behavior of~$\varphi$. Essentially, we will need that an
appropriate power of $\varphi$ has an ``angular derivative''.
In order to prove this we will use 
the following result of Warschawski~\cite[Theorem~5]{Warschawski1967}.

\begin{lemma}\label{warschawski}
Let $V$ be a simply-connected domain with the property
 that for all $\psi\in (0,\pi/2)$ there exists 
$R_\psi>0$ such that 
\begin{equation} \label{b8a}
V_\psi:=\{z\colon \re z>R_\psi,\; |\im z|<\psi\}\subset V.
\end{equation}
Let 
\begin{equation} \label{b8c}
S:=\left\{z\colon |\im z|<\frac{\pi}{2}\right\}
\end{equation}
and let $h\colon V\to S$ be a conformal map such that $\re h(x)\to\infty$
as $x\to\infty$.

Let $x_0\in\R$ be such that $[x_0,\infty)\subset V$ and,
for $x\geq x_0$, let $\theta(x)\in (0,\infty]$ be the length of the line segment of
$V\cap \{x+iy\colon y\in\R\}$ that contains~$x$. 
Put $\varepsilon(x):=\max\{\theta(x)-\pi,0\}$ and suppose that
\begin{equation} \label{b8}
\int_{x_0}^\infty \varepsilon(x)dx<\infty.
\end{equation}
Then there exists $\lambda\in (-\infty,+\infty]$ such that if $\psi\in (0,\pi/2)$,
then $h(z)-z\to \lambda$ as $\re z\to\infty$, uniformly for $z\in V_\psi$.
\end{lemma}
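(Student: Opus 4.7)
The plan is to apply the Ahlfors length-area method to the conformal map $h\colon V\to S$, with the summability of $\varepsilon$ providing the error control. The central idea is that the conformal modulus of a sub-quadrilateral of $V$ can be estimated both from the cross-sectional widths $\theta(x)$ and from the horizontal extent of its image inside the strip~$S$; matching the two forces $h$ to be an asymptotic translation.

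First, for $x_0\le x_1<x_2$, consider the curvilinear quadrilateral $Q(x_1,x_2):=V\cap\{x_1<\re z<x_2\}$ with its two distinguished sides on the cross-sections $\re z=x_i$. By the standard length-area inequality (Cauchy--Schwarz applied to horizontal segments),
\begin{equation}
\modulus(Q(x_1,x_2))\;\ge\;\int_{x_1}^{x_2}\frac{dx}{\theta(x)}.
\end{equation}
Since $\theta(x)\le\pi+\varepsilon(x)$ for large $x$ (wherever $\theta$ is finite, which happens almost everywhere by the summability of $\varepsilon$), expanding $1/(\pi+\varepsilon)=1/\pi-\varepsilon/\pi^2+O(\varepsilon^2)$ gives
\begin{equation}
\modulus(Q(x_1,x_2))\;\ge\;\frac{x_2-x_1}{\pi}-\frac{1}{\pi^2}\int_{x_1}^{x_2}\varepsilon(x)\,dx+o(1).
\end{equation}
Conformal invariance gives $\modulus(h(Q))=\modulus(Q)$, while the inclusion $h(Q)\subset S$ and the fact that $S$ has width~$\pi$ yield the upper bound $\modulus(h(Q))\le (\re h(x_2)-\re h(x_1))/\pi+O(1)$. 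Combining these produces the one-sided asymptotic $\re h(x_2)-\re h(x_1)\ge (x_2-x_1)-C\int_{x_1}^{x_2}\varepsilon(x)\,dx-o(1)$.

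For the reverse inequality I would invoke $V_\psi\subset V$ with $\psi$ close to $\pi/2$. This gives $\theta(x)\ge 2\psi$ for large $x$ and, more importantly, guarantees that the image side-segments $h(V\cap\{\re z=x_i\})$ stay close to the vertical line $\re w=\re h(x_i)$ in $S$. A dual length-area estimate then provides $\modulus(Q(x_1,x_2))\le (x_2-x_1)/\pi+C\int_{x_1}^{x_2}\varepsilon(x)\,dx+o(1)$. Setting $F(x):=\re h(x)-x$, the two-sided inequality becomes $|F(x_2)-F(x_1)|\le C'\int_{x_1}^{x_2}\varepsilon(x)\,dx+o(1)$, so by summability $F$ is Cauchy as $x\to\infty$ and tends to a limit $\lambda\in(-\infty,+\infty]$ (the value $+\infty$ arising when the reverse bound degenerates).

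To upgrade pointwise convergence on $\R$ to uniform convergence of $h(z)-z\to\lambda$ on each $V_\psi$ (including the imaginary part), I would use a normal families argument. The shifted univalent maps $h_t(z):=h(z+t)-t$ are defined on $\{|\im z|<\psi\}$ for $t$ large and form a normal family by Montel's theorem; any limit is univalent and must be the translation $z\mapsto z+\lambda$ by the real-axis analysis. The main obstacle is the reverse modulus bound: handling the (measure-zero) set where $\theta(x)=\infty$ requires truncation inside $V_\psi\subset V$, and showing that the side-segments of $Q$ map close to vertical lines in $S$ demands additional hyperbolic-metric or Koebe-distortion estimates.
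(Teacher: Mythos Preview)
The paper does not give its own proof of this lemma: it is simply quoted as \cite[Theorem~5]{Warschawski1967}, so there is no argument in the paper to compare against. Your outline via the Ahlfors length--area (distortion) method is indeed the approach Warschawski himself uses, and the overall strategy---bounding the modulus of $Q(x_1,x_2)$ above and below by $\int_{x_1}^{x_2}dx/\theta(x)$ plus a summable error, translating this into Cauchy behavior of $\re h(x)-x$, and then upgrading via normality---is correct in spirit.

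That said, two of the steps you flag as obstacles are genuine and not just technicalities. First, the upper bound $\modulus(h(Q))\le(\re h(x_2)-\re h(x_1))/\pi+O(1)$ is not automatic: the images of the cross-cuts $\{\re z=x_i\}$ under $h$ are arcs in $S$, not vertical segments, so one must first show these arcs lie in bounded horizontal windows (this is where Warschawski invokes an auxiliary estimate on the oscillation of $\re h$ along cross-cuts). Second, your normal-families argument presupposes $\lambda\in\R$; when $\lambda=+\infty$ (which the statement allows) the limit of $h_t$ is not a translation but the constant $\infty$, and the uniform convergence of $h(z)-z\to+\infty$ on $V_\psi$ must be argued separately. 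These are exactly the points that make Warschawski's paper nontrivial, so citing it---as the authors do---is the efficient route.
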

In our application the domain $V$ will be symmetric with respect to 
the real axis and it will contain the strip~$S$. 
Thus $\varepsilon(x)=\theta(x)-\pi$ in this case.
Moreover, we have $h(x)\in\R$ for $x\in\R$.
We can then deduce from the Schwarz lemma that $h'(x)<1$. 
It follows that in this special case we have $\lambda<\infty$.

Then Warschawski's result takes the following form.
\begin{lemma}\label{warschawski2}
Let $V$ be a simply connected domain which is symmetric with respect to the 
real axis and contains the strip~$S$.
Let $h\colon V\to S$ be conformal with $\re h(x)\to\infty$
as $x\to\infty$ and for $x\in\R$ let $\theta(x)\in (0,\infty]$ be the
length of the line segment of
$V\cap \{x+iy\colon y\in\R\}$ that contains~$x$. 
Suppose that 
\begin{equation} \label{b8b}
\int_{0}^\infty (\theta(x)-\pi )dx<\infty.
\end{equation}
Then there exists $\lambda\in\R$ such that if $\delta\in (0,\pi/2)$,
then $h(z)-z\to \lambda$ as $\re z\to\infty$, uniformly for $|\im z|\leq\pi/2-\delta$.
\end{lemma}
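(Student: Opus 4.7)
The plan is to derive Lemma~\ref{warschawski2} directly from Lemma~\ref{warschawski} after (i) verifying Warschawski's hypotheses and (ii) ruling out $\lambda=+\infty$ via a Schwarz-type argument that exploits the symmetry of~$V$.

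For (i), since $V\supset S$ and $S=\{|\im z|<\pi/2\}$, the set $V_\psi=\{z\colon \re z>R_\psi,\ |\im z|<\psi\}$ is contained in $S\subset V$ for every $\psi\in(0,\pi/2)$ (taking $R_\psi=0$, say), so the geometric hypothesis of Lemma~\ref{warschawski} is met. For each $x\in\R$ the vertical slice of $V$ through $x$ contains the vertical slice of~$S$, which has length $\pi$, so $\theta(x)\geq \pi$ and hence $\varepsilon(x)=\theta(x)-\pi$; thus \eqref{b8} follows from \eqref{b8b}. Lemma~\ref{warschawski} then supplies some $\lambda\in(-\infty,+\infty]$ such that $h(z)-z\to\lambda$ uniformly on each $V_\psi$, and applying this with $\psi$ slightly larger than $\pi/2-\delta$ yields the uniform convergence claimed in Lemma~\ref{warschawski2}.

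For (ii), I first arrange that $h$ is real on~$\R$. The map $\tilde h(z):=\overline{h(\bar z)}$ is a second conformal bijection $V\to S$ sending the right end of~$V$ to the right end of~$S$, so $\tilde h\circ h^{-1}$ is an automorphism of~$S$ fixing both ends $\pm\infty$, and is therefore a real translation $w\mapsto w+c$ with $c\in\R$ (passing to the right half-plane via $w\mapsto e^w$ reduces this to the well-known fact that automorphisms of $\{\re w>0\}$ fixing $0$ and $\infty$ are multiplications by positive reals). Writing $\overline{h(\bar z)}=h(z)+c$ and evaluating at a real point forces $c=0$, so $h(\bar z)=\overline{h(z)}$; in particular $h(\R)\subset\R$ and $h(S)\subset h(V)=S$.

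If $V=S$ then $h$ is itself a translation and $\lambda\in\R$ is immediate. Otherwise $h|_S$ is a holomorphic self-map of~$S$ that is not surjective, and the Schwarz--Pick inequality for the hyperbolic metric of~$S$---whose density $1/\cos y$ equals $1$ on the real axis---gives $0<h'(x)<1$ for every $x\in\R$. Consequently $x\mapsto h(x)-x$ is strictly decreasing on~$\R$, so
\begin{equation}
\lambda=\lim_{x\to\infty}(h(x)-x)\leq h(0)<\infty.
\end{equation}
The main technical point is this Schwarz reduction, which is where the symmetry hypothesis is used; everything else is a direct invocation of Warschawski's theorem.
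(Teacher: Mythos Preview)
Your proposal is correct and follows essentially the same approach as the paper: verify the hypotheses of Lemma~\ref{warschawski} using $V\supset S$, then use the symmetry of $V$ to force $h(\R)\subset\R$ and apply a Schwarz--Pick argument to get $h'(x)<1$ on $\R$, whence $\lambda<\infty$. The paper's own argument is the brief paragraph preceding the statement of Lemma~\ref{warschawski2}; you have simply supplied the details (in particular the justification that $h$ is real on~$\R$ and the explicit hyperbolic-metric computation) that the paper leaves implicit.
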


\section{Proof of Theorem~\ref{theorem1}} \label{prooftheorem}
Let $a:=\exp(2\pi i/M)$ if $M\geq 2$ and let $a\in\C\setminus\{0,1\}$ be arbitrary if $M=1$.
Let $G$ be a conformal map from the triangle with vertices~$0$,  $\pi/2$ and
$(\pi+i\pi)/2$ onto the disk (or a half-plane) which contains the points~$0$, $1$ and~$a$
in its boundary,
such that
$G(0)=0$, $G((\pi+i\pi)/2)=1$ and $G(\pi/2)=a$.
By reflection, $G$ extends to an even elliptic function with periods $\pi$ and $i\pi$.
The critical points of $G$ are at the points $m\pi/2+in\pi/2$ with $m,n\in\Z$.
Moreover, $G$ has the three critical values~$0$, $1$ and~$a$,
with all $a$-points being of multiplicity~$2$.

One can write $G$ in terms of the Weierstrass $\wp$-function
with periods $\pi$ and $\pi i$ as follows.
First we note that for a square lattice we have $e_2=0$ and $e_1=-e_3$,
using the standard notation $e_1=\wp(\pi/2)$, $e_2=\wp((\pi+i \pi)/2)$ and 
$e_3=\wp(i\pi/2)$ for the (finite) critical values of~$\wp$.
It follows that the critical values of $\wp^2$ are given by
$0$, $e_1^2$ and~$\infty$. Choosing  $L$ as the fractional linear transformation
satisfying $L(\infty)=0$, $L(0)=1$ and $L(e_1^2)=a$  we then find that 
$G=L(\wp^2)$.

Let $H:=G^M$. Then $H$ is an even elliptic function. If $M\geq 2$, then $H$
has the critical values~$0$, $1$ and $\infty$ and all poles of $H$ have multiplicity~$M$.
If $M=1$ so that $H=G$, then $H$ has critical values~$0$, $1$ and $a$ and the poles 
are simple. Thus in any case we have $H\in\Speiser_3$ and the poles of $H$ have  multiplicity~$M$.

We now consider 
\begin{equation} \label{3c}
F(z):=H(\arcsin z) 
\end{equation} 
and note that this defines a meromorphic function $F\in \Speiser_3$.
Examples of this type were previously considered 
in~\cite[p.~734]{Teichmueller1944}, \cite[Section~5]{Bank1976},
\cite[Section~2]{Langley2002}, \cite{Eremenko2004}
and~\cite{BergweilerCui}.

We note that the points $z_k:=\pi/2+ik\pi/2$ are critical points of~$H$.
It follows that the points 
$x_k :=\sin( \pi/2+ik\pi/2)=\cosh(k\pi/2)$
are critical points of~$F$.
Note that if $y>0$, then 
\begin{equation} \label{3e}
\cosh(x+y)=\frac12\left(e^{x+y}+e^{-x-y}\right)\leq\frac12\left(e^{x+y}+e^{-x+y}\right) 
=e^y\cosh(x)
\end{equation} 
for all $x\in\R$. Thus 
\begin{equation} \label{3f}
\frac{x_{k+1}}{x_k} \leq e^{\pi/2} 
\end{equation} 
for all $k\in\N$.
Since $F$ is even, we also have the critical points $x_0:=0$ and $x_{-k}:=-x_k$ for $k\in\N$.

Since the elliptic function $H$ has order~$2$, and $H(z)=F(\sin z)$,
a result of Edrei and Fuchs~\cite[Corollary~1.2]{Edrei1964} yields that $F$ has order~$0$.
Together with Theorem~\ref{thma}
this implies that if $\rho=0$, then $f:=F$ satisfies the conclusion of our theorem.

In the case that $\rho>0$ we will define $f:=F\circ g$ for a suitable entire function~$g$. 
Before starting with the construction of $g$ we note that much more precise information 
about the growth of $F$ is known, not just that it has order~$0$. 
In fact, one can show as in the papers cited above that 
\begin{equation} \label{3g}
T(r,F)\sim c( \log r)^2
\end{equation} 
for some positive constant $c$ as $r\to\infty$.
Together with Lemma~\ref{lemma1} this yields that  
\begin{equation} \label{3h}
\rho(f)=\rho(F\circ g)=2\rho(g). 
\end{equation} 
We shall thus construct a suitable entire function $g$ of order $\alpha:=\rho/2$.

First we restrict to the case that $0<\alpha<1$. 
For $k\in\Z$ we define $j(k)$ as the maximal integer such that 
$\{i\pi k+x\colon -\infty<x\leq \log x_{j(k)}\}$ does not intersect the sector
\begin{equation} \label{3k}
W_\alpha:=\left\{z\colon|\arg z|<\frac{\alpha\pi}{2}\right\}.
\end{equation} 
Equivalently, 
\begin{equation} \label{3i}
j(k) =\max\left\{n\in\Z\colon \log x_n \leq \pi |k|\cot\frac{\alpha\pi}{2}\right\}.
\end{equation} 
We will apply the MacLane-Vinberg method with $c_k:=(-1)^k x_{j(k)}$ and thus
consider the domain $\Omega$ defined by~\eqref{b3}.
Since $\Omega$ is symmetric with respect to the real axis, we may choose the
conformal map $\varphi$ from the lower half-plane to $\Omega$
such that $\varphi(iy)\in\R$ for $y<0$.
By construction, $\Omega$ contains the sector $W_\alpha$.
On the other hand, it follows from~\eqref{3f} that the endpoints of the lines 
that form the boundary of $\Omega$ have distance at most $\pi/2$ from~$\partial W_\alpha$.

For $r>0$, let $\psi(r)$ be the angular measure of the arc of the
circle $\{z\colon |z|=r\}$ which is contained in~$\Omega$.
It follows from the above considerations that there exists $K>0$ such that
\begin{equation} \label{6a}
\alpha\pi\leq \psi(r) \leq\alpha\pi+\frac{K}{r}.
\end{equation}
Let $V$ be the component of 
the preimage of $\Omega$ under $z\mapsto e^{\alpha z}$ which contains $(0,\infty)$.
Thus $V$ is the image of $\Omega$ under the map $z\mapsto (\log z)/\alpha$, 
with the principal branch of the logarithm.
Let $\theta$ be defined as in Lemma~\ref{warschawski2}. Then 
\begin{equation} \label{6b0}
\theta(x)= \frac{1}{\alpha}\psi(e^{\alpha x})
\end{equation}
and hence
\begin{equation} \label{6c}
\pi \leq \theta(x) \leq\pi +\frac{K}{\alpha e^{\alpha x}}.
\end{equation}
We will apply Lemma~\ref{warschawski2} to the conformal map
\begin{equation} \label{6c1}
h\colon V\to S,\quad 
h(w)=\log\!\left( i\varphi^{-1}(e^{\alpha w})\right).
\end{equation}
Note that by~\eqref{6c} the hypotheses of this lemma
are satisfied.
Hence there exists $\lambda\in\R$ 
such that for any $\delta>0$ we have
\begin{equation} \label{6d}
h(w)=w+\lambda+o(1)
\quad\text{as}\ \re w\to\infty, \  |\im w|\leq \frac{\pi}{2}-\delta.
\end{equation}
We may normalize $\varphi$ and hence $h$ so that $\lambda=0$.
Thus 
\begin{equation} \label{6d1}
 \varphi^{-1}(e^{\alpha w}) \sim -i e^w
\quad\text{as}\ \re w\to\infty, \  |\im w|\leq \frac{\pi}{2}-\delta.
\end{equation}
Putting $z=e^w$ this takes the form 
\begin{equation} \label{6d2}
 \varphi^{-1}(z^\alpha) \sim -i z
\quad\text{as}\ |z|\to\infty, \  |\arg z|\leq \frac{\pi}{2}-\delta.
\end{equation}
This implies that 
\begin{equation} \label{6e}
\varphi(-iz)\sim z^\alpha
\quad \text{as}\ |z|\to\infty,\ |\arg z|\leq \frac{\pi}{2}-\delta .
\end{equation}
Recalling that $g$ is defined by $g(z)=\exp\varphi(z)$ for $\im z<0$ we deduce that 
\begin{equation} \label{6e1}
\log M(r,g)\geq \log|g(-ir)|=\re \varphi(-ir)\geq (1-o(1))r^\alpha
\end{equation}
so that $\rho(g)\geq\alpha$.

To prove that $\rho(g)\leq\alpha$ we 
note that, by~\eqref{6e}, 
\begin{equation} \label{6e2}
\log |g(z)|\leq (1+o(1))|z|^\alpha
\quad\text{as}\ |z|\to\infty, \ -\pi+\delta \leq \arg z \leq -\delta.
\end{equation}
Since $g$ is symmetric with respect to the real axis,
the same estimate holds for $\delta \leq \arg z \leq \pi-\delta$.

It remains to estimate $g$ in the sectors $T:=\{z\colon |\arg z|\leq \delta\}$ and $-T$.
To this end we consider the function
$k\colon T\to \C$, $k(z)=\exp(-z^\alpha)g(z)$.
It follows from~\eqref{6e} that $k(re^{-i\delta})\to 0$ as $r\to\infty$.
By symmetry we also have $k(re^{i\delta})\to 0$ as $r\to\infty$.
Thus $k$ is bounded on $\partial T$. 
On the other hand, since $g$ has order at most~$2$, we 
have $\log |k(z)|\leq \log |g(z)|\leq |z|^{3}$ if $z\in T$ and $|z|$ is 
sufficiently large.
Choosing $\delta<\pi/6$ we deduce from the Phragm\'en-Lindel\"of 
principle~\cite[Chapter~I, Theorem~21]{Levin1964} that 
$k$ is bounded in~$T$. This implies 
that $\log |g(z)|\leq (1+o(1))|z|^\alpha$ as $|z|\to\infty$, $z\in T$.
The same argument can be made for the sector $-T$. 
Altogether we obtain $\log|g(z)|\leq (1+o(1))|z|^\alpha$ as $|z|\to\infty$, with no
restriction on $\arg z$, which together with~\eqref{6e1} yields that 
\begin{equation} \label{6e4}
\log M(r,g)\sim r^\alpha
\quad \text{as}\ r\to\infty.
\end{equation}
In particular, 
\begin{equation} \label{6e3}
\rho(g)= \alpha=\frac{\rho}{2}.
\end{equation}
In view of~\eqref{3h} the function $f=F\circ g$ thus satisfies $\rho(f)=\rho$.

By construction, the critical values of $g$ are critical points of~$F$. This implies
that $f$ and $F$ have the same critical values.
Thus $f$ has three critical values. Next we note that
$g$ has no asymptotic value. For example,
this follows since $g$ is even and can thus be written in the form $g(z)=g_0(z^2)$ 
for some entire function~$g_0$. We have $\rho(g_0)=\rho(g)/2=\alpha/2<1/2$.
A classical result of Wiman now implies that $g_0$ is unbounded on any curve 
tending to~$\infty$. In particular, $g_0$ has no asymptotic value.
It follows that $g$ and hence $f$ have no asymptotic value.
Altogether we thus find that $f\in\Speiser_3$.

Let $(a_j)$ be the sequence of poles of $f=F\circ g$. Since the poles have 
multiplicity $M$ there exists a sequence $(b_j)$ in $\C\setminus\{0\}$ such that
\begin{equation} \label{6f}
f(z) \sim \left(\frac{b_j}{z-a_j}\right)^M
\quad \text{as}\ z\to a_j .
\end{equation}
In order to apply Lemma~\ref{mayer-urbanski}
we want to estimate $|b_j|$ in terms of~$|a_j|$.

We begin by considering the corresponding terms for~$F$.
First we note that since $G$ is elliptic, the residues at the poles of $G$ can 
take only finitely many values. Thus their moduli are bounded below, say
$|\res(\zeta,G)|\geq C_0>0$ if $\zeta$ is a pole of~$G$.
It follows that if $(\alpha_j)$ is the sequence of poles of~$H$, 
and $\beta_j$ is such that 
\begin{equation} \label{6g}
H(z) \sim \left(\frac{\beta_j}{z-\alpha_j}\right)^M
\quad \text{as}\ z\to \alpha_j ,
\end{equation}
then $|\beta_j|\geq C_0$ for all $j\in\N$.

Let now $(A_j)$ be the sequence of poles of $F$ and let $B_j$ be such that
\begin{equation} \label{6h}
F(z) \sim \left(\frac{B_j}{z-A_j}\right)^M
\quad \text{as}\ z\to A_j .
\end{equation}
For each $j\in\N$ there exists  $\alpha_j\in\C$  such that $\sin\alpha_j=A_j$.
It follows that 
\begin{equation} \label{6i}
\begin{aligned}
H(z)
=F(\sin z) 
&\sim \left(\frac{B_j}{\sin z-A_j}\right)^M
= \left(\frac{B_j}{\sin z-\sin \alpha_j}\right)^M \\
& = \left(\frac{z-\alpha_j}{\sin z-\sin \alpha_j}\cdot\frac{B_j}{z-\alpha_j}\right)^M
\\ &
\sim\left(\frac{1}{\cos\alpha_j}\cdot\frac{B_j}{z-\alpha_j}\right)^M
\quad \text{as}\ z\to \alpha_j .
\end{aligned}
\end{equation}
Combining~\eqref{6g} and~\eqref{6i} we deduce that 
\begin{equation} \label{6j}
B_j^M
=\beta_j^M\cos^M\alpha_j .
\end{equation}
Since $\sin\alpha_j=A_j\to\infty$ as $j\to\infty$, we have 
\begin{equation} \label{6j1}
|\cos\alpha_j| =\sqrt{\left|1-\sin^2\alpha_j\right|}\sim |\sin\alpha_j|=|A_j|.
\end{equation}
Thus
\begin{equation} \label{6k}
|B_j|=|\beta_j\cos\alpha_j|\geq C_0|\cos\alpha_j|\geq 
(C_0-o(1)|A_j|
\end{equation}
as $j\to\infty$.
In particular, there exists a positive constant $C$ such that 
\begin{equation} \label{6k1}
|B_j|\geq C|A_j|
\end{equation}
for all~$j\in\N$.

Recall that $(a_j)$ is the sequence of poles of $f=F\circ g$ while 
$(A_j)$ is the sequence of poles of~$F$. It follows that for each $j\in\N$
there exists $k\in\N$ such that $g(a_j)=A_k$. 

Similarly as in~\eqref{6i} we see that,
as $z\to a_j$ and hence $g(z)\to A_k$,
\begin{equation} \label{6l}
\begin{aligned}
f(z)
=F(g(z)) &
\sim \left(\frac{B_k}{g(z)-A_k}\right)^M
=\left(\frac{B_k}{g(z)-g(a_j)}\right)^M
\\ &
\sim \left(\frac{B_k}{g'(a_j)(z-a_j)}\right)^M .
\end{aligned}
\end{equation}
Together with~\eqref{6f} and~\eqref{6k1} this yields that 
\begin{equation} \label{6m}
|b_j|
=\left|\frac{B_k}{g'(a_j)}\right|
\geq C\left|\frac{A_k}{g'(a_j)}\right|
=C\left|\frac{g(a_j)}{g'(a_j)}\right|
=\frac{C}{|\varphi'(a_j)|}. 
\end{equation}

It follows from~\eqref{6e} that if $0<\delta<\delta'<\pi/2$, then
\begin{equation} \label{6n}
-i\varphi'(-iz)\sim \alpha z^{\alpha-1}
\quad \text{as}\ |z|\to\infty,\ |\arg z|<\frac{\pi}{2}-\delta'.
\end{equation}
Combining~\eqref{6m} and~\eqref{6n} we see that, as $j\to\infty$,
\begin{equation} \label{6p}
|b_j|\geq (1-o(1)) \frac{C}{\alpha|a_j|^{\alpha-1}}
\quad\text{for}\ 
a_j\in \Delta:=\left\{re^{it}\colon -\frac{3\pi}{4}<t<-\frac{\pi}{4}\right\}.
\end{equation}

We are interested in exponents $t$ for which the series in Lemma~\ref{lemma2} diverges.
It follows from~\eqref{6p} that the series 
\begin{equation} \label{6p1}
\sum_{a_j\in \Delta}\left(\frac{|b_j|}{|a_j|^{1+1/M}}\right)^t 
\end{equation}
and hence the series in~\eqref{5b} diverge if 
\begin{equation} \label{6p2}
\sum_{a_j\in \Delta}\left(\frac{1}{|a_j|^{\alpha-1}|a_j|^{1+1/M}}\right)^t 
=\sum_{a_j\in \Delta}\frac{1}{|a_j|^{t(\alpha+1/M)}}=\infty.
\end{equation}

We thus have to estimate the exponent of convergence of the poles of $H$ in~$\Delta$.
In order to do so let $p$ be a pole of $G$ and hence of~$H$.
Then $p_n:=p-i\pi n$ is also a pole of~$H$, for all $n\in\N$.
Hence $q_n:=\sin p_n$ is a pole of~$F$.
Note that with $w:=p-\pi/2+i\log 2$ we have  
\begin{equation} \label{6q}
q_n=\frac{1}{2i}\left(e^{n\pi+ip}-e^{-n\pi-ip}\right)
=\frac{e^{n\pi+ip}}{2i}\left(1-e^{-2n\pi-i2p}\right)
=\exp(n\pi+iw+\delta_n)
\end{equation}
for some (complex) sequence $(\delta_n)$ tending to~$0$.
For $m\in\Z$ we now put
\begin{equation} \label{6r}
u_{m,n}=n\pi+i2m\pi+ iw+\delta_n .
\end{equation}
It follows that if $\varphi(z)=u_{m,n}$, then 
$g(z)=\exp\varphi(z)=\exp u_{m,n}=q_n$ so that $z$ is a pole of~$H$.
In other words, the preimages of the $u_{m,n}$ under $\varphi$ are poles of~$H$.

For $r>0$ we put 
\begin{equation} \label{6s}
\Delta_r:=\Delta\cap \left\{z\colon \frac12 r \leq |z|\leq r\right\}
=\left\{z \colon \frac12 r \leq |z|\leq r,\; -\frac{3\pi}{4}<\arg z<-\frac{\pi}{4}\right\}.
\end{equation}
It follows from~\eqref{6e} that
for large $r$ the map $\varphi$ is univalent in $\Delta_r$ and that given $\varepsilon>0$
we have
\begin{equation} \label{6t}
\varphi(\Delta_r)\supset 
\left\{z\colon \frac{1+\varepsilon}{2^\alpha} r^\alpha \leq |z|\leq (1-\varepsilon) r^\alpha,
\; |\arg z|<(1-\varepsilon) \frac{\alpha\pi}{4}\right\}.
\end{equation}
This easily yields that there exists a positive constant $\eta$ such that
if $r$ is sufficiently large, then
$\varphi(\Delta_r)$ contains at least $\eta r^{2\alpha}$ of the points $u_{m,n}$.
Thus the number of poles of $H$ in $\Delta_r$ is at least $\eta r^{2\alpha}$.
This implies (see, e.g., \cite[Chapter~2, Theorem 1.8]{Goldberg2008}) that 
\begin{equation} \label{6u}
\sum_{a_j\in \Delta}\frac{1}{|a_j|^{2\alpha}}=\infty.
\end{equation}
Recalling that $\alpha=\rho/2$ we conclude that~\eqref{6p2} holds for 
\begin{equation} \label{6v}
t=\frac{2\alpha}{\alpha+1/M} 
=\frac{2M\alpha}{M\alpha+1} 
=\frac{2M\rho}{2+M\rho} . 
\end{equation}
Lemma~\ref{mayer-urbanski} yields that $\dim I(f)\geq t$. 
As the opposite inequality follows from Theorem~\ref{thma}, we conclude that~\eqref{1c}
holds.

This proves the theorem in the case that $0<\rho<2$ so that $0<\alpha<1$.
Suppose now that $\rho\geq 2$. We put $N:=\lfloor\rho\rfloor$  and $\rho_0:=\rho/N$.
Then $1\leq \rho_0<2$.
Let $f_0\in\Speiser_3$ be the meromorphic function of order $\rho_0$ obtained by 
the above construction.
Thus $f_0=F\circ g_0$ for the entire function $g_0$ obtained by the MacLane-Vinberg method
with $\alpha=\rho_0/2$.
The construction yields that $f_0(0)=0$ and that $0$ is a critical value of~$f_0$.

Let $f_1:=f_0^N$. Then $\rho(f_1)=\rho(f_0)=\rho_0$. The function $f_1$ is obtained
the same way as the function $f_0$, except that the multiplicities of the poles of
$f_1$ is $NM$. Thus~\eqref{1c} takes the form 
\begin{equation} \label{6w}
\dim I(f_1)=\frac{2NM\rho(f_1)}{2+NM\rho(f_1)} 
=\frac{2NM\rho_0}{2+NM\rho_0} 
=\frac{2M\rho}{2+M\rho} . 
\end{equation}
We now put
\begin{equation} \label{6x}
f(z):=f_0(z^N) .
\end{equation}
Since $f_0\in\Speiser_3$, $f_0(0)=0$ and $0$ is a critical value of~$f_0$, we find that $f\in\Speiser_3$.
Moreover, we have $\rho(f)=N\rho(f_0)=\rho$.

With $P(z):=z^N$ we have 
\begin{equation} \label{6y}
P\circ f =f_1\circ P.
\end{equation}
It is not difficult to see that this implies that 
$I(f_1)= P(I(f))$ and hence
\begin{equation} \label{6z}
\dim I(f)=\dim I(f_1) .
\end{equation}
Combining this with~\eqref{6w} we see that~\eqref{1c} holds in this case as well.\qed

\section{Proof of Theorem~\ref{theorem2}} \label{prooftheorem2}
Let $H=G^M$ be as in Section~\ref{prooftheorem}. Let $p$ be a pole of $H$
and let $p_{m,n}=p+m\pi+in\pi$.
Then $p_{m,n}$ is a pole of $H$ for all $m,n\in\Z$ and there exists $\beta$ such that 
\begin{equation} \label{7a}
H(z) \sim \left(\frac{\beta}{z-p_{m,n}}\right)^M
\quad \text{as}\ z\to p_{m,n}.
\end{equation}
Let $a_{m,n,k}=\log p_{m,n} +2\pi i k$, with the principle branch of the logarithm.
Then $a_{m,n,k}$ is a pole of order $M$ of $f(z):=H(e^z)$.
Thus there exist $b_{m,n,k}$ such that 
\begin{equation} \label{7b}
f(z)\sim  \left(\frac{b_{m,n,k}}{z-{a_{m,n,k}}}\right)^M
\quad \text{as}\ z\to a_{m,n,k}.
\end{equation}
Since
\begin{equation} \label{7c}
\begin{aligned} 
f(z) 
&\sim \left(\frac{\beta}{e^z-p_{m,n}}\right)^M
= \left(\frac{\beta}{e^z-e^{a_{m,n,k}}}\right)^M
\\ &
\sim  \left(\frac{\beta}{e^{a_{m,n,k}}(z-{a_{m,n,k}})}\right)^M
=  \left(\frac{\beta}{p_{m,n}(z-{a_{m,n,k}})}\right)^M
\quad \text{as}\ z\to a_{m,n,k} ,
\end{aligned} 
\end{equation}
we have 
\begin{equation} \label{7d}
b_{m,n,k}=\frac{\beta}{p_{m,n}}.
\end{equation}
The series \eqref{5c} contains
\begin{equation} \label{7e}
\begin{aligned} 
\sum_{m,n,k=-\infty}^\infty\left(\frac{b_{m,n,k}}{|a_{m,n,k}|^{1+1/M}}\right)^t 
&=
\sum_{m,n,k=-\infty}^\infty\left(\frac{|\beta|}{|p_{m,n}|\cdot|a_{m,n,k}|^{1+1/M}}\right)^t 
\\ &
=|\beta|^t\sum_{m,n=-\infty}^\infty\frac{1}{|p_{m,n}|^t}
\sum_{k=-\infty}^\infty\frac{1}{|a_{m,n,k}|^{(1+1/M)t}} 
\end{aligned} 
\end{equation}
as a subseries.

By a suitable choice of the conformal map defining $G$ we may achieve that $G$ 
and hence $H$ have
no poles of modulus~$1$. This implies that there exists $\delta>0$ such that 
$|\log |p_{m,n}||\geq \delta$ for all $m,n\in\Z$.
Fix $m$ and $n$ and write $\log p_{m,n}=u+iv$ with $u,v\in\R$.
Then $|u|\geq\delta$ and $|v|\leq\pi$.
Hence, assuming that $\delta\leq 1/2$, we have
\begin{equation} \label{7e1}
\begin{aligned} 
|a_{m,n,k}|^2
&=u^2+(v+2\pi k)^2
\leq u^2+2(v^2+(2\pi k)^2)
\leq  u^2+2\pi^2 +8\pi^2 k^2
\\ &
\leq \left(1+\frac{2\pi^2}{\delta^2}\right)u^2+8\pi^2 k^2
\leq \left(1+\frac{2\pi^2}{\delta^2}\right)(u^2+ k^2).
\end{aligned} 
\end{equation}
With 
\begin{equation} \label{7e2}
A_t:=\left(1+\frac{2\pi^2}{\delta^2}\right)^{-(1+1/M)t/2}
\quad\text{and}\quad
B_t:=A_t  \int_{0}^\infty\frac{dy}{|1+y^2|^{(1+1/M)t/2}}
\end{equation}
we thus have
\begin{equation} \label{7e3}
\begin{aligned} 
\sum_{k=-\infty}^\infty\frac{1}{|a_{m,n,k}|^{(1+1/M)t}} 
&\geq 
A_t \sum_{k=-\infty}^\infty\frac{1}{|u^2+k^2|^{(1+1/M)t/2}} 
\\ &
\geq 
A_t \int_{0}^\infty\frac{dx}{|u^2+x^2|^{(1+1/M)t/2}} 
=\frac{B_t}{|u|^{(1+1/M)t-1}}.
\end{aligned} 
\end{equation}
Recalling that $u=\log|p_{m,n}|$ 
and noting that given $\varepsilon>0$ there exists $C>0$ such that 
$|\log|p_{m,n}||^{(1+1/M)t-1}\leq C|p_{m,n}|^\varepsilon$ 
for all $m,n\in\Z$ 
we deduce that 
\begin{equation} \label{7e4}
\sum_{k=-\infty}^\infty\frac{1}{|a_{m,n,k}|^{(1+1/M)t}} 
\geq \frac{B_t}{|\log|p_{m,n}||^{(1+1/M)t-1}}
\geq \frac{B_t}{C |p_{m,n}|^{\varepsilon}}.
\end{equation}
Together with~\eqref{7e} we thus have 
\begin{equation} \label{7e5}
\sum_{m,n,k=-\infty}^\infty\left(\frac{b_{m,n,k}}{|a_{m,n,k}|^{1+1/M}}\right)^t 
\geq 
\frac{|\beta|^t B_t}{C}\sum_{m,n=-\infty}^\infty\frac{1}{|p_{m,n}|^{t+\varepsilon}}.
\end{equation}
Since 
\begin{equation} \label{7j}
\sum_{m,n=-\infty}^\infty\frac{1}{|p_{m,n}|^2}
\end{equation}
diverges, the series on the left hand side of~\eqref{7e} and~\eqref{7e5}
  diverges for all $t\in (0,2)$.
Hence the series in~\eqref{5c} diverges for all $t\in (0,2)$.
Lemma \ref{mayer-urbanski} now shows that $\dim I(f)=2$.\qed

\section{Proof of Theorem~\ref{theorem3}} \label{prooftheorem3}
Suppose first that $d_J=2$. If $d_I=2$, then the function constructed in Theorem~\ref{theorem2} has
the desired property. Thus let $0\leq d_I<2=d_J$. We put $M=2$ and choose $\rho\in [0,\infty)$ 
such that 
\begin{equation} \label{4a}
\frac{2M\rho}{2+M\rho}=d_I.
\end{equation}
Let $f$ be the function constructed in Theorem~\ref{theorem1}.
Then $\dim I(f)=d_I$. However, for $\alpha\in\C\setminus\{0\}$ and $\beta\in\C$ we also have
$\dim I(\alpha f+\beta )=d_I$. To see this we recall that the essential point in the proof was
to determine for which parameters $t$ the series~\eqref{6p1} converges. This does not change
when we replace $f$ by $\alpha f+\beta$.

We shall show that for a suitable choice of the parameters $\alpha$ and $\beta$ we have
$J(\alpha f+\beta )=\C$ and thus $\dim J(\alpha f+\beta )=2=d_J$.
In order to do so we recall that $f$ has infinitely many poles $a_1,a_2,\dots$.
We put $\alpha=a_2-a_1$ and $\beta=a_1$ so that $f_0:=\alpha f+\beta = (a_2-a_1)f+a_1$.
Since $f$ has the critical values $0$, $1$ and $\infty$, we find that $f_0$ has 
the critical values $a_1$, $a_2$ and~$\infty$.
Thus all critical values of $f_0$ are poles. Since $f$ and hence $f_0$
have no asymptotic values, 
standard results of complex dynamics relating periodic components of the Fatou set
to singularities of the inverse~\cite[\S 4.3]{Bergweiler1993}
now show that $f_0$ has no attracting or parabolic basins, no Siegel disks, and no Herman rings.
Since functions in the Speiser class have no wandering domains and no Baker
domains~\cite{Baker1992,Rippon1999}, it follows 
that $J(f)=\C$.
This completes the proof in the case that $d_J=2$.

Suppose now that $0<d_J<2$. 
First we consider the case that $d_I<d_J$ so that $0\leq d_I<d_J<2$. We choose $M\in\N$ such 
that 
\begin{equation} \label{4b}
\frac{2M}{1+M}>d_J
\end{equation}
and then choose $\rho\in[0,2)$ such that~\eqref{4a} holds.

We proceed as in the proof of Theorem~1.1, but with two modifications:
\begin{itemize}
\item[$(a)$]
For a large integer $N$ to be determined, we define $c_k=(-1)^k$ for $|k|\leq N$, 
and define $c_k=(-1)^k x_{j(k)}$, with $j(k)$ given by~\eqref{3i}, only for $|k|>N$.
We denote the resulting entire function by $g_N$.
\item[$(b)$]
We replace the elliptic function $H=G^M$ by $H_\kappa(z)=H(\kappa z)$, where 
$\kappa$ is chosen so small that $H_\kappa(D(0,4))\subset D(0,1)$.
\end{itemize}
If we apply the MacLane-Vinberg method to the sequence $((-1)^k)_{k\in\Z}$, then 
the resulting function $g$ has the form $g(z)=\cos(\alpha z+\beta)$ where $\alpha ,\beta\in\C$, with $\alpha \neq 0$.
We had already noted that for symmetric sequences the resulting functions can assumed to be even.
It thus follows from $(a)$ that, as $N\to\infty$, we have $g_N(z)\to \cos(\alpha z)$ for some 
$\alpha\in\C\setminus\{0\}$.
We can normalize the maps $\varphi$ such that $\alpha=1$.

Since $\arcsin x=\pi/2-\arccos x$ this yields with $F_\kappa(z)=H_\kappa(\arcsin z)$ as in~\eqref{3c} that 
\begin{equation} \label{4d}
F_\kappa(g_N(z))= H_\kappa\!\left(\frac{\pi}{2}-\arccos g_N(z)\right) \to K(z):=H_\kappa\!\left(\frac{\pi}{2}-z\right)
\end{equation}
as $N\to\infty$.

It follows from $(b)$ that $K(D(0,2))\subset D(0,1)$. By~\eqref{4d} we also have
\begin{equation} \label{4e}
(F_\kappa\circ g_N)(D(0,2))\subset D(0,1)
\end{equation}
if $N$ is sufficiently large.
This implies that $K$ and $F_\kappa\circ g_N$ have attracting fixed points, whose
attracting basins
contain $D(0,2)$ and thus in particular the critical values $0$ and~$1$.
This implies~\cite[Lemma~2.7]{BergweilerCui} that both $F(K)$ and $F(F_\kappa\circ g_N)$ consist of 
a single attracting basin and that $J(K)$ and $J(F_\kappa\circ g_N)$ are totally disconnected.

A result of Kotus and Urba\'nski~\cite{Kotus2003} says that $\dim J(K)>2M/(1+M)$.
It follows from~\cite[Lemma~2.5]{BergweilerCui} that if $N$ is sufficiently large,
then we also have
\begin{equation} \label{4f}
\dim J(F_\kappa\circ g_N)>\frac{2M}{1+M}.
\end{equation}
For $N$ satisfying~\eqref{4e} and~\eqref{4f}
and $\lambda\in (0,1]$ we consider the function $f_\lambda$  defined by
\begin{equation} \label{4g}
f_\lambda(z)=F_\kappa(g_N(\lambda z)).
\end{equation}
The arguments in the proof of Theorem~\ref{theorem1} depend only on the 
behavior of $g(z)$ as $|z|\to\infty$. So they remain valid if $g$ is replaced by~$g_N$.
They also hold if $H$ and $f$ are replaced by $H_\kappa$ and $f_\lambda$.
Thus~\eqref{1c} holds for $f=f_\lambda$ and hence 
\begin{equation} \label{4g1}
\dim I(f_\lambda)=d_I
\end{equation}
for $\lambda\in (0,1]$ by~\eqref{4a}.

Using the same arguments as in~\cite{BergweilerCui} we find that the function
\begin{equation} \label{4h}
\lambda\mapsto \dim J(f_\lambda)
\end{equation}
is continuous and that 
\begin{equation} \label{4i}
\lim_{\lambda\to 0} \dim J(f_\lambda) =d_I<d_J.
\end{equation}
Since $f_1=F_\kappa\circ g_N$ it thus follows from~\eqref{4b} and~\eqref{4f} that
$\dim J(f_1)>d_J$.
Thus there exists $\lambda\in(0,1)$ such that $\dim J(f_\lambda) =d_J$.
This completes the proof in the case that $d_I<d_J$.

It remains to consider the case that $d_I=d_J\in (0,2)$.
Here we choose $M\in\N$ and $\rho\in (0,2)$ such that 
\begin{equation} \label{4j}
\frac{2M\rho}{2+M\rho}=d_I=d_J.
\end{equation}
Again we put $\alpha:=\rho/2$.
The maximum modulus of the function $g$ considered in the proof of 
Theorem~\ref{theorem1} has the asymptotics~\eqref{6e4}.
We will modify the construction to achieve that instead we have
\begin{equation} \label{4l}
\log M(r,g)\sim \frac{r^\alpha}{(\log r)^{2q}}
\quad \text{as}\ r\to\infty
\end{equation}
for some (large) $q\in\N$.
In order to do so we recall that $z\mapsto E_\alpha(z):=e^{\alpha z}$ maps the strip 
$T:=\{z\colon |\im z|<\pi/\alpha\}$ conformally onto $\C\setminus (-\infty,0]$
and that $V$ was defined as the component of the preimage of 
the comb domain $\Omega$ under $E_\alpha$ which contains the real axis.
Here the definition of $\Omega$ was made in such a way that 
that it contains the sector $W_\alpha=\{z\colon |\arg z|<\alpha\pi/2\}$, which is the 
image of the strip $S=\{z\colon |\im z|<\pi/2\}$ under~$E_\alpha$.
Consequently, $V\supset S$.

We now replace the map $E_\alpha$ by 
\begin{equation} \label{4m}
z\mapsto E_\alpha^*(z):=\frac{e^{\alpha z}}{(z^2+c^2)^q},
\end{equation}
where $c$ is a large positive constant and $q\in\N$ is also large.
If $c$ is chosen sufficiently large, then there exists a domain
$T^*$ containing the strip $S$ such that $E_\alpha^*$ maps $T^*$ conformally
onto $\C\setminus (-\infty,0]$. 
To see this, we note that given $\delta>0$ and a horizontal strip of
height $\pi-\delta$ we can choose $c$ such that $(E_\alpha^*)'(z)$ is contained 
in some half-plane for all $z$ in this strip. It follows  that $E_\alpha^*$ is univalent
in such a strip. This easily implies that a domain $T^*$ with the required properties
exists.

Instead of the sector 
$W_\alpha=\{z\colon|\arg z|<\alpha\pi/2\}=E_\alpha(S)$ we now consider 
the domain $W_\alpha^*=E_\alpha^*(S)$.
For $k\in\Z$ we define, analogously to the proof of Theorem~\ref{theorem1},
now $j(k)$ as the maximal integer such that
the half-line
$\{i\pi k+x\colon -\infty<x\leq \log x_{j(k)}\}$ does not intersect~$W_\alpha^*$. 

Again we apply the MacLane-Vinberg method with $c_k=(-1)^k x_{j(k)}$
and consider the comb domain $\Omega^*$ defined by~\eqref{b3}. We define $V^*$ as the component of $(E_\alpha^*)^{-1}(\Omega^*)$ that contains~$\R$.
Then $V^*\supset S$.

 Let $\varphi$ be a conformal map from the lower half-plane to $\Omega^*$ such that $\varphi(iy)\in\R$
for $y<0$.
Instead of the map $h$ defined by~\eqref{6c1} we now consider the map
\begin{equation} \label{4n}
h\colon V^*\to S,\quad 
h(w)=\log\!\left( i\varphi^{-1}(E_\alpha^*(w))\right).
\end{equation}
Again we find that there exists $\lambda\in\R$ such that~\eqref{6d} holds for every
$\delta>0$. As before we may achieve that $\lambda=0$ by a normalization and 
obtain
\begin{equation} \label{4o}
 \varphi^{-1}(E_\alpha^*(w)) \sim -i e^w
\quad\text{as}\ \re w\to\infty, \  |\im w|\leq \frac{\pi}{2}-\delta ,
\end{equation}
instead of~\eqref{6d1}.
Again we put $z=e^w$ so that $w=\log z$,
with the principal branch of the logarithm. Instead of~\eqref{6d2} and~\eqref{6e}
we now have
\begin{equation} \label{4p}
 \varphi^{-1}\!\left(\frac{z^\alpha}{((\log z)^2+c^2)^q}\right) \sim -i z
\quad\text{as}\ |z|\to\infty, \  |\arg z|\leq \frac{\pi}{2}-\delta.
\end{equation}
and
\begin{equation} \label{4q}
\varphi(-iz)\sim \frac{z^\alpha}{((\log z)^2+c^2)^q}
\sim \frac{z^\alpha}{(\log z)^{2q}}
\quad \text{as}\ |z|\to\infty,\ |\arg z|\leq \frac{\pi}{2}-\delta ,
\end{equation}
for every $\delta>0$.
The arguments used to prove~\eqref{6e4} now yield~\eqref{4l}.

The order of $g$ and the order of the function $f$ given
by $f=F\circ g$ again satisfy~\eqref{3h} and~\eqref{6e3}, but in
view of Lemma~\ref{lemma1a}, \eqref{3g} and~\eqref{4l} we also have
\begin{equation} \label{4s}
T(r,f)=\O\!\left( \frac{r^{2\alpha}}{(\log r)^{4q}}\right)
=\O\!\left( \frac{r^{\rho(f)}}{(\log r)^{4q}}\right)
\end{equation}
We defined $t$ by~\eqref{5e1}. Thus $t=d_I=d_J$ by~\eqref{4j}.
Choosing $q$ sufficiently large we deduce from Lemma~\ref{lemma2b} that~\eqref{5c}
holds.

As in~\cite{Bergweiler2012} we put $B(R)=\{z\colon |z|>R\}\cup\{\infty\}$.
Here $R\geq (16R_0)^M$ where $R_0$ is chosen such that $D(0,R_0)$ contains
all singularities of $f^{-1}$ and $R_0>|f(0)|$. Thus in our case we can 
take $R_0=2$.
Let $E_l$ denote the collection of all components $V$ of $f^{-l}(B(R))$
for which $f^k(V)\subset B(R)$ for $k=0,1,\dots,l-1$. 
It is shown in~\cite[p.~5376f]{Bergweiler2012} that $E_l$ is a cover of the set
\begin{equation} \label{4t}
\{z\in B(3R): f^k(z)\in B(3R)\ \mbox{for}\  1\leq  k \leq
l-1\}.
\end{equation}
and that
\begin{equation} \label{4u}
\sum_{V\in E_l}\left(\diam_{\chi}(V)\right)^t
\leq \frac{1}{M}\left(\frac{32}{(2R)^{1/M}24}\right)^{\! t}
\left( M (2^{1/M} 24)^{t}
\! \sum_{j=n(R)}^\infty
\left(\frac{|b_j|}{|a_j|^{1+1/M}}\right)^{\! t} \right)^{\! l}.
\end{equation}
Here $\diam_{\chi}(\cdot)$ denotes the spherical diameter.
In~\cite{Bergweiler2012} it is assumed that~\eqref{5c} holds, but the
argument also works if $t$ is defined by~\eqref{5e1}.

All the above conclusions also hold if $f$ is replaced 
by the function $f_\lambda$ defined by $f_\lambda(z)=f(\lambda z)$,
where $\lambda\in (0,1]$.
Replacing $f$ by $f_\lambda$ leaves the $b_j$ unchanged, but replaces
$a_j$ by $a_j/\lambda$.
By choosing $\lambda$ small we may thus achieve that not only~\eqref{5c} holds,
but that
\begin{equation} \label{4v}
M (2^{1/M} 24)^{t} \sum_{j=1}^\infty\left(\frac{|b_j|}{|a_j|^{1+1/M}}\right)^t < 1.
\end{equation}
We may also achieve that $f_\lambda\!\left(\overline{D(0,3R)}\right)\subset D(0,R)$ and thus 
$J(f_\lambda)\subset B(3R)$ by choosing $\lambda$ small.
This implies that $E_l$ is a cover of $J(f_\lambda)$ and that 
\begin{equation} \label{4w}
\sum_{V\in E_l}\left(\diam_{\chi}(V)\right)^t \to 0
\end{equation}
as $l\to\infty$. This yields that $\dim I(f_{\lambda})\leq \dim J(f_\lambda)\leq t=d_J=d_I$.

It remains to prove that $\dim I(f_\lambda)\geq t$.
This follows by minor modifications of the arguments in the proof of 
Theorem~\ref{theorem1}.
We omit the details.

\begin{ack}
The second author gratefully acknowledges support from Vergstiftelsen. We thank the referee for a detailed reading and valuable comments. We also thank Alexandre Eremenko for suggesting the reference [26].
\end{ack}

\bigskip

\noindent
W. Bergweiler: Mathematisches Seminar,
Christian--Albrechts--Universit\"at zu Kiel,
Heinrich--Hecht--Platz 6,
24098 Kiel, 
Germany\\
Email: bergweiler@math.uni-kiel.de

\medskip

\noindent
W. Cui: Centre for Mathematical Sciences, Lund University, Box 118, 22 100 Lund, Sweden\\
E-mail: weiwei.cui@math.lth.se

\end{document}